\begin{document}

\title{An iterative method for the solution of Laplace-like\\
 equations in high and very high space dimensions}

\author{Harry Yserentant}

\institute{
 Technische Universit\"at Berlin, Institut f\"ur Mathematik,
 10623 Berlin, Germany\\
\email{yserentant@math.tu-berlin.de}}

\date{November 18, 2024}

\titlerunning{An iterative method for Laplace-like 
 equations in high dimensions} 
 
\authorrunning{Harry Yserentant}

\maketitle

\noindent
{\bf Abstract}
\medskip

\noindent
This paper deals with the equation $-\Delta u+\mu u=f$
on high-dimensional spaces $\mathbb{R}^m$, where the 
right-hand side $f(x)=F(Tx)$ is composed of a separable 
function $F$ with an integrable Fourier transform on a 
space of a dimension $n>m$ and a linear mapping given 
by a matrix $T$ of full rank and $\mu\geq 0$ is a constant. 
For example, the right-hand side can explicitly depend 
on differences $x_i-x_j$ of components of $x$. Following 
our publication [Numer. Math. (2020) 146:219--238], we 
show that the solution of this equation can be expanded 
into sums of functions of the same structure and develop 
in this framework an equally simple and fast iterative 
method for its computation. The method is based on the 
observation that in almost all cases and for large problem 
classes the expression $\|T^ty\|^2$ deviates on the unit 
sphere $\|y\|=1$ the less from its mean value the higher 
the dimension $m$ is, a concentration of measure effect.
The higher the dimension $m$, the faster the iteration 
converges.

%
%
%

%
%
%

%
%
%
%
%
%
%
%


\renewcommand {\thefigure}{\arabic{figure}}

\renewcommand {\thetable}{\arabic{table}}

\newcommand   {\rmref}[1]    {{\rm (\ref{#1})}}

\newcommand   {\diff}[1]     {\mathrm{d}#1}

\newcommand   {\fourier}[1]  {\widehat{#1}}

\newcommand   {\tnorm}[1]    {|\!|\!|#1|\!|\!|}

\def \P       {\mathbb{P}}

\newcommand   {\sectorx}[1]  {\P\big(\big\{x\,\big|\,#1\big\}\big)}
\newcommand   {\sectoro}[1]  {\P\big(\big\{\omega\,\big|\,#1\big\}\big)}
\newcommand   {\sectore}[1]  {\P\big(\big\{\eta\,\big|\,#1\big\}\big)}

\def \dx      {\,\diff{x}}
\def \dy      {\,\diff{y}}

\def \deta    {\,\diff{\eta}}
\def \domega  {\,\diff{\omega}}

\def \dr      {\,\diff{r}}
\def \ds      {\,\diff{s}}
\def \dt      {\,\diff{t}}

\def \e       {\mathrm{e}}
\def \i       {\mathrm{i}}

\def \B       {\mathcal{B}}
\def \L       {\mathcal{L}}

\def \a       {\alpha}
\def \b       {\beta}


\section{Introduction}
\label{sec1}

\setcounter{equation}{0}
\setcounter{theorem}{0}

The numerical solution of partial differential 
equations in high space dimensions is a difficult 
and challenging task. Methods such as finite 
elements, which work perfectly in two or three 
dimensions, are not suitable for solving such 
problems because the effort grows exponentially 
with the dimension. Random walk based techniques 
only provide solution values at selected points. 
Sparse grid methods are best suited for problems 
in still moderate dimensions. Tensor-based methods 
\cite{Bachmayr}, \cite{Hackbusch_1}, 
\cite{Khoromskij} stand out in this area. They 
are not subject to such limitations and perform 
surprisingly well in a large number of cases. 
Tensor-based methods exploit the structure of
the solution rather than its regularity.
Consider the equation
\begin{equation}    \label{eq1.1}
-\Delta u+\mu u=f
\end{equation}
on $\mathbb{R}^m$ for high dimensions $m$, 
where $\mu>0$ is a given constant. Provided the 
right-hand side $f$ of the equation (\ref{eq1.1}) 
possesses an integrable Fourier transform,  
\begin{equation}    \label{eq1.2}
u(x)=\left(\frac{1}{\sqrt{2\pi}}\right)^m\!\int
\frac{1}{\|\omega\|^2+\mu}\,\fourier{f}(\omega)\,
\e^{\,\i\,\omega\cdot x}\domega
\end{equation}
is a solution of this equation, and the only solution
that tends uniformly to zero as $x$ goes to infinity.
If the right-hand side $f$ of the equation is a 
tensor product
\begin{equation}    \label{eq1.3}
f(x)=\prod_i\phi_i(x_i)
\end{equation}
of functions, say from the three-dimensional 
space to the real numbers, or a sum of such 
tensor products, the same holds for the 
Fourier transform of $f$. If one replaces 
the corresponding term in the high-dimensional 
integral (\ref{eq1.2}) by an approximation 
\begin{equation}    \label{eq1.4}
\frac{1}{\|\omega\|^2+\mu}\approx
\sum_k a_k\,\e^{-\beta_k\left(\|\omega\|^2+\mu\right)}
= \sum_k a_k\,\e^{-\beta_k\mu}
\prod_i\e^{-\beta_k\|\omega_i\|^2}
\end{equation}
based on an appropriate approximation of $1/r$ 
by a sum of exponential functions, the integral 
then collapses to a sum of products of 
lower-dimensional integrals. That is, the 
solution can be approximated by a sum of such 
tensor products whose number is independent 
of the space dimension. The computational 
effort no longer increases exponentially, 
but only linearly with the space dimension.

However, the right-hand side of the equation does
not always have such a simple structure and 
cannot always be well represented by tensors of 
low rank. A prominent example is quantum mechanics.  
The potential in the Schr\"odinger equation depends 
on the distances between the particles considered.
Therefore, it is desirable to approximate the 
solutions of this equation by functions that 
explicitly depend on the position of the particles 
relative to each other. As a building block in more 
comprehensive calculations, this can require the 
solution of equations of the form (\ref{eq1.1}) 
with right-hand sides that are composed of terms 
such as
\begin{equation}    \label{eq1.5}
f(x)=\bigg(\prod_i\phi_i(x_i)\bigg)
\bigg(\prod_{i<j}\phi_{ij}(x_i-x_j)\bigg).
\end{equation}
The question is whether such structures transfer 
to the solution and whether in such a context 
arising iterates stay in this class. The present 
work deals with this problem. We present a 
conceptually simple iterative method that 
preserves such structures and takes advantage 
of the high dimensions.

First we embed the problem as in our former paper 
\cite{Yserentant_2020} into a higher dimensional 
space introducing, for example, some or all 
differences $x_i-x_j$, $i<j$, in addition to the 
components $x_i$ of the vector $x\in\mathbb{R}^m$ 
as additional variables. We assume that the 
right-hand side of the equation (\ref{eq1.1}) is 
of the form $f(x)=F(Tx)$, where $T$ is a matrix 
of full rank that maps the vectors in $\mathbb{R}^m$ 
to vectors in an $\mathbb{R}^n$ of a still higher 
dimension and $F:\mathbb{R}^n\to\mathbb{R}$ 
is a function that possesses an integrable Fourier 
transform and as such is continuous. The solution 
of the equation (\ref{eq1.1}) is then the trace 
$u(x)=U(Tx)$ of the then equally continuous 
function
\begin{equation}    \label{eq1.6}
U(y)=\left(\frac{1}{\sqrt{2\pi}}\right)^n\!\int
\frac{1}{\|T^t\omega\|^2+\mu}\,\fourier{F}(\omega)\,
\e^{\,\i\,\omega\cdot y}\domega,
\end{equation}
which is, in a corresponding sense, the solution 
of a degenerate elliptic equation 
\begin{equation}    \label{eq1.7}
\L U+\mu U=F.
\end{equation} 
This equation replaces the original equation 
(\ref{eq1.1}). Its solution (\ref{eq1.6}) 
is approximated by the iterates arising from 
a polynomially accelerated version of the 
basic method
\begin{equation}    \label{eq1.8}     
U_{k+1}=\,U_k\,-\,(-\Delta+\mu)^{-1}(\L U_k+\mu U_k-F)
\end{equation}
starting from $U_0=0$. The calculation of the 
iterates requires the solution of equations of 
the form (\ref{eq1.1}), that is, the calculation 
of integrals of the form (\ref{eq1.2}), now over 
the higher dimensional $\mathbb{R}^n$. The 
symbol $\|T^t\omega\|^2$ of the operator $\L$ 
is a homogeneous second-order polynomial in 
$\omega$. For separable right-hand sides $F$ 
as above, the calculation of these integrals 
thus reduces to the calculation of products 
of lower, in the extreme case one-dimensional 
integrals.

The reason for the usually astonishingly fast 
convergence of this iteration is the directional 
behavior of the term $\|T^t\omega\|^2$. The higher 
the dimension  $m$, the lower the probability that 
the values $\|T^t\eta\|^2$ for $\eta$ on the unit 
sphere $S^{n-1}$ of the $\mathbb{R}^n$ deviate 
much from their mean, a typical concentration 
of measure effect. To capture this phenomenon 
quantitatively, we introduce the probability 
measure
\begin{equation}    \label{eq1.9}
\P(M)=\frac{1}{n\nu_n}\int_{M\cap S^{n-1}}\!\deta
\end{equation}
on the Borel subsets $M$ of the $\mathbb{R}^n$, 
where $\nu_n$ is the volume of the unit ball 
and $n\nu_n$ thus is the area of the unit sphere.
If $M$ is a subset of the unit sphere, $\P(M)$ 
is equal to the ratio of the area of M to the 
area of the unit sphere. If $M$ is a sector, 
that is, if $M$ contains with a vector $\omega$ 
also its scalar multiples, $\P(M)$ measures the 
opening angle of $M$. The quantity on which all 
our analysis is based is the angular distribution
\begin{equation}    \label{eq1.10}
F(\delta)={\P\big(\big\{\omega\in\mathbb{R}^n\,\big|\,
\|T^t\omega\|^2\leq\delta\|\omega\|^2\big\}\big)}
\end{equation}
of the values $\|T^t\omega\|^2$. We assume 
that its expected value, the mean value of 
the expression $\|T^t\eta\|^2$ over the unit 
sphere, is one. This is only a matter of the 
scaling of the variables in the higher 
dimensional space and does not represent a 
restriction. Apart from extreme cases, the 
distribution (\ref{eq1.10}) approaches a 
normal distribution with increasing dimensions. 
The concentration of measure effect is reflected 
in the fact that the variance of the distribution 
decreases and tends, under rather general 
circumstances, to zero as the dimensions 
increase.

For a given $\rho<1$, let $S$ be the sector that 
consists of the points $\omega$ for which the 
expression $\|T^t\omega\|^2$ differs from 
$\|\omega\|^2$ by $\rho\|\omega\|^2$ or less. 
If the Fourier transform of the right-hand side 
of the equation (\ref{eq1.7}) vanishes at all 
$\omega$ outside this set, the same holds for 
the Fourier transform of its solution (\ref{eq1.6}) 
and the Fourier transforms of the iterates $U_k$. 
Under this condition, the iteration error 
decreases at least like 
\begin{equation}    \label{eq1.11}
\|U-U_k\|\leq\rho^k\|U\|
\end{equation}
with respect to a broad range of Fourier-based 
norms. This is admittedly an idealized 
situation and the actual convergence behavior 
is more complicated. Nevertheless, this example 
accurately describes what to expect. Provided 
that for points $\eta$ on the unit sphere
the values $\|T^t\eta\|^2$ are approximately 
normally distributed with expected value $E=1$ 
and small variance $V$, the measure (\ref{eq1.9}) 
of the sector $S$ is almost one as soon as $\rho$ 
exceeds the standard deviation $\sigma=\sqrt{V}$ 
by more than a moderate factor. The sector $S$ 
then fills almost the entire frequency space. 
The higher the dimensions and the smaller the 
variance, the faster the iterates approach 
the solution.

The rest of this paper is organized as follows. 
Section~\ref{sec2} sets the framework and is  
devoted to the representation of the solutions 
of the equation (\ref{eq1.1}) as traces of higher 
dimensional functions (\ref{eq1.6}) for 
right-hand sides that are themselves traces of 
functions with an integrable Fourier transform. 
In comparison with the proof in 
\cite{Yserentant_2020}, we give a more direct 
proof of this representation. In addition, we 
introduce two scales of norms with respect to 
which we later estimate the iteration error.

In a sense, the following section forms the core 
of the present work. It is devoted to the study
of the angular distribution (\ref{eq1.10}) of 
the values $\|T^t\omega\|^2$. Our first result, 
Theorem~\ref{thm3.1}, is a semi-explicit 
representation of the density of this distribution 
in form of an integral over the unit sphere in 
$\mathbb{R}^m$. It gives detailed information
about the behavior of this distribution for small 
$\delta$. In a special case, when all singular 
values of $T$ are equal, it becomes a rescaled 
beta distribution. Moreover, we show how the 
expected value $E$ and the variance $V$ of the 
distribution can be expressed in terms of the 
singular values of the matrix $T$. Using this 
representation, we show that for random matrices 
$T$ with assigned expected value $E=1$ the 
expected value of the variances $V$ not only 
tends to zero as the dimension $m$ goes to 
infinity, but also that these variances cluster 
the more around their expected value the larger 
$m$ is. We also study a class of matrices $T$ 
that are associated with interaction graphs. 
These matrices correspond to the case that some 
or all coordinate differences are introduced as 
additional variables and formed the motivation 
for the present work. The expected values 
that are assigned to these matrices take the 
value one and the variances $V$ can be 
expressed directly in terms of the vertex 
degrees. Finally, we show how to efficiently 
sample the values $\|T^t\eta\|^2$ for a large 
number of points $\eta$ that are uniformly 
distributed on the unit sphere. Calculations 
of this kind support our claim  that these 
values are approximately normally distributed 
in high dimensions.

In the final section, we return to the
higher-dimensional counterpart (\ref{eq1.7}) of 
the original equation (\ref{eq1.1}) and examine
the convergence behavior of the polynomially 
accelerated version of the iteration (\ref{eq1.8}) 
for its solution. Special attention is given to 
the limit case $\mu=0$ of the Laplace equation. 
The section ends with a brief review of an 
approximation of the form (\ref{eq1.4}) by 
sums of Gauss functions.


\setcounter{section}{1}

\section{Solutions as traces of higher-dimensional functions}
\label{sec2}

\setcounter{equation}{0}
\setcounter{theorem}{0}
 
In this paper we are mainly concerned with 
functions $U:\mathbb{R}^n\to\mathbb{R}$, $n$ a 
potentially high dimension, that possess the
then unique representation
\begin{equation}    \label{eq2.1}
U(y)=\left(\frac{1}{\sqrt{2\pi}}\right)^n
\!\int\fourier{U}(\omega)\,
\e^{\,\i\,\omega\cdot y}\domega
\end{equation}
in terms of an integrable function $\fourier{U}$, 
their Fourier transform. Such functions are 
uniformly continuous by the Riemann-Lebesgue theorem 
and vanish at infinity. The space $\B_0(\mathbb{R}^n)$ 
of these functions becomes a Banach space under 
the norm
\begin{equation}    \label{eq2.2}   
\|U\|_0=\int|\fourier{U}(\omega)|\domega.
\end{equation}
Let $T$ be an arbitrary $(n\times m)$-matrix 
of full rank $m<n$ and let
\begin{equation}    \label{eq2.3}
u:\mathbb{R}^m\to\mathbb{R}:x\to U(Tx)
\end{equation}
be the trace of a function in $U\in\B_0(\mathbb{R}^n)$.
Since the functions in $\B_0(\mathbb{R}^n)$ are uniformly 
continuous, the same also applies to the traces of these 
functions. Since there is a constant~$c$ with 
$\|x\|\leq c\,\|Tx\|$ for all $x\in\mathbb{R}^m$, the 
trace functions (\ref{eq2.3}) vanish at infinity as 
$U$ itself. The next lemma gives a criterion for the 
existence of partial derivatives of the trace functions, 
where we use the common multi-index notation. 

\begin{lemma}       \label{lm2.1}
Let $U:\mathbb{R}^n\to\mathbb{R}$ be a function 
in $\B_0(\mathbb{R}^n)$ and let the functions
\begin{equation}    \label{eq2.4}
\omega\to (\i\,T^t\omega)^\beta\fourier{U}(\omega),
\quad \beta\leq\alpha,
\end{equation}
be integrable. Then the trace function \rmref{eq2.3} 
possesses the partial derivative
\begin{equation}    \label{eq2.5}
(\mathrm{D}^\alpha u)(x)=
\left(\frac{1}{\sqrt{2\pi}}\right)^n
\!\int(\i\,T^t\omega)^\alpha\fourier{U}(\omega)\,
\e^{\,\i\,\omega\cdot Tx}\domega
\end{equation}
that, like $u$, is itself uniformly continuous 
and vanishes at infinity.
\end{lemma}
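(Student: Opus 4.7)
The plan is to establish the formula \rmref{eq2.5} by induction on the order $|\alpha|$, using differentiation under the integral sign, and then to recognize the right-hand side of \rmref{eq2.5} as the trace of a function that again lies in $\B_0(\mathbb{R}^n)$, so that the regularity properties asserted at the end of the lemma follow from the remarks made just before its statement.

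For the base case $|\alpha|=0$ there is nothing to show, since \rmref{eq2.5} reduces to the defining representation $u(x)=U(Tx)$ together with \rmref{eq2.1}. For the inductive step I would assume \rmref{eq2.5} already for some $\beta\leq\alpha$ with $|\beta|<|\alpha|$, pick a unit multi-index $e_j$ with $\beta+e_j\leq\alpha$, and differentiate the integrand pointwise with respect to $x_j$. Since $\omega\cdot Tx=T^t\omega\cdot x$, the chain rule produces exactly the factor $\i(T^t\omega)_j$, so the formal partial derivative of the integrand is
\begin{equation*}
(\i\,T^t\omega)^{\beta+e_j}\fourier{U}(\omega)\,\e^{\,\i\,\omega\cdot Tx},
\end{equation*}
whose modulus is bounded, uniformly in $x$, by the function $\omega\mapsto|(\i\,T^t\omega)^{\beta+e_j}\fourier{U}(\omega)|$. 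By hypothesis this dominating function is integrable, so the standard Leibniz theorem (justified via dominated convergence applied to difference quotients) permits moving $\partial/\partial x_j$ inside the integral and yields \rmref{eq2.5} for $\beta+e_j$.

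Finally, the formula \rmref{eq2.5} exhibits $\mathrm{D}^\alpha u$ as the trace at $Tx$ of the function on $\mathbb{R}^n$ whose Fourier transform is the integrable function $\omega\mapsto(\i\,T^t\omega)^\alpha\fourier{U}(\omega)$. This function therefore belongs to $\B_0(\mathbb{R}^n)$, and the observation recorded just before the lemma—that traces under $T$ of such functions inherit uniform continuity and decay at infinity—closes the argument. The only real obstacle is bookkeeping: the hypothesis must be imposed on \emph{all} $\beta\leq\alpha$, not just on $\alpha$ itself, because each step of the induction needs an integrable dominating function both for the already constructed intermediate derivative and for the next one. With that caveat observed, no difficulty beyond the Leibniz rule and the Riemann-Lebesgue-type statement about $\B_0(\mathbb{R}^n)$ is encountered.
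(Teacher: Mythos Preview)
Your proof is correct and follows essentially the same route as the paper: induction on the order of the derivative, differentiation under the integral sign justified by dominated convergence on the difference quotients (using the identity $\omega\cdot Tx=T^t\omega\cdot x$), and the Riemann--Lebesgue type properties of $\B_0(\mathbb{R}^n)$ for the final regularity claim. The paper makes the dominated-convergence step for a single first-order derivative explicit and then appeals to induction, exactly as you do.
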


\begin{proof}
Let $e_k\in\mathbb{R}^m$ be the vector with the
components $e_k|_j=\delta_{kj}$. To begin with, 
we examine the limit behavior of the difference 
quotient
\begin{displaymath}
\frac{u(x+he_k)-u(x)}{h} =
\left(\frac{1}{\sqrt{2\pi}}\right)^n\!\int
\frac{\e^{\,\i\,h\omega\cdot Te_k}-1}{h}\;
\fourier{U}(\omega)\,
\e^{\,\i\,\omega\cdot Tx}\domega
\end{displaymath}
of the trace function as $h$ goes to zero.
Because of
\begin{displaymath}
\left|\,\frac{\e^{\,\i\,ht}-1}{h}\,\right|\leq\,|\,t\,|,
\quad
\lim_{h\to 0}\frac{\e^{\,\i\,ht}-1}{h}=\,\i\,t,
\end{displaymath}
and under the condition that the function 
$\omega\to\omega\cdot Te_k\,\fourier{U}(\omega)$
is integrable, it tends to 
\begin{displaymath}
(\mathrm{D}_ku)(x) = 
\left(\frac{1}{\sqrt{2\pi}}\right)^n
\!\int\i\,\omega\cdot Te_k\,\fourier{U}(\omega)\,
\e^{\,\i\,\omega\cdot Tx}\domega
\end{displaymath}
as follows from the dominated convergence theorem.
Because of $\omega\cdot Te_k=T^t\omega\cdot e_k$,
this proves (\ref{eq2.5}) for partial derivatives
of order one. For partial derivatives of higher
order, the proposition follows by induction.
\qed 
\end{proof}
Let $D(\L)$ be the space of the functions 
$U\in\B_0(\mathbb{R}^n)$ with finite (semi)-norm
\begin{equation}    \label{eq2.6}
\|U\|_2=\int\|T^t\omega\|^2\,|\fourier{U}(\omega)|\domega.
\end{equation}
Because of 
$|(T^t\omega)^\beta|\leq 1+\|T^t\omega\|^2$
for all multi-indices $\beta$ of order two 
or less, the traces of the functions in this 
space are twice continuously differentiable
by Lemma~\ref{lm2.1}. Let 
$\L:D(\L)\to\B_0(\mathbb{R}^n)$ be the  
pseudo-differential operator given by
\begin{equation}    \label{eq2.7}
(\L U)(y)=\left(\frac{1}{\sqrt{2\pi}}\right)^n
\!\int\|T^t\omega\|^2\,\fourier{U}(\omega)\,
\e^{\,\i\,\omega\cdot y}\domega.
\end{equation}
For the functions $U\in D(\L)$ and their 
traces (\ref{eq2.3}), by Lemma~\ref{lm2.1}
\begin{equation}    \label{eq2.8}
-\,(\Delta u)(x)=(\L U)(Tx)
\end{equation}
holds. With corresponding right-hand sides,
the solutions of the equation (\ref{eq1.1})
are thus the traces of the solutions 
$U\in D(\L)$ of the pseudo-differential 
equation
\begin{equation}    \label{eq2.9}
\L U+\mu U=F.
\end{equation}

\begin{theorem}     \label{thm2.1}
Let $F:\mathbb{R}^n\to\mathbb{R}$ be a function with 
integrable Fourier transform, let $f(x)=F(Tx)$, and 
let $\mu$ be a positive constant. Then the trace 
\rmref{eq2.3} of the function
\begin{equation}    \label{eq2.10}
U(y)=\left(\frac{1}{\sqrt{2\pi}}\right)^n\!\int
\frac{1}{\|T^t\omega\|^2+\mu}\,\fourier{F}(\omega)\,
\e^{\,\i\,\omega\cdot y}\domega
\end{equation}
is twice continuously differentiable 
and the only solution of the equation
\begin{equation}    \label{eq2.11}
-\Delta u+\mu u=f
\end{equation}
whose values tend uniformly to zero as 
$\|x\|$ goes to infinity. Provided the 
function 
\begin{equation}    \label{eq2.12}
\omega\,\to\,
\frac{1}{\|T^t\omega\|^2}\,\fourier{F}(\omega)\,
\end{equation}
is integrable, the same holds
in the limit case $\mu=0$.
\end{theorem}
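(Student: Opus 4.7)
The plan is to verify directly that the function $U$ defined by (\ref{eq2.10}) lies in $D(\L)$, satisfies the pseudo-differential equation $\L U+\mu U=F$, and belongs to $\B_0(\mathbb{R}^n)$; then passing to the trace $u(x)=U(Tx)$ via Lemma~\ref{lm2.1} and identity (\ref{eq2.8}) immediately produces a solution of (\ref{eq2.11}) of the required class. Uniqueness will then follow from a standard maximum principle argument applied to the difference of two such solutions.

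First I would verify that $\fourier{U}(\omega)=\fourier{F}(\omega)/(\|T^t\omega\|^2+\mu)$ is integrable, so that $U\in\B_0(\mathbb{R}^n)$. For $\mu>0$ this is immediate from the bound $|\fourier{U}(\omega)|\leq|\fourier{F}(\omega)|/\mu$; in the limit case $\mu=0$ it is precisely the assumption that (\ref{eq2.12}) be integrable. Next I would check that $U\in D(\L)$ via the pointwise estimate $\|T^t\omega\|^2\,|\fourier{U}(\omega)|\leq|\fourier{F}(\omega)|$, valid in both cases, so that the seminorm (\ref{eq2.6}) is finite. Once this is established, Lemma~\ref{lm2.1} applied to multi-indices of order at most two gives that $u=U\circ T$ is twice continuously differentiable and that (\ref{eq2.8}) holds. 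By construction $(\|T^t\omega\|^2+\mu)\fourier{U}(\omega)=\fourier{F}(\omega)$, and Fourier inversion together with the integrability of both sides yields $\L U+\mu U=F$. Taking traces and using (\ref{eq2.8}) produces $-\Delta u+\mu u=f$. The asymptotic decay $u(x)\to 0$ as $\|x\|\to\infty$ is inherited from $U\in\B_0(\mathbb{R}^n)$ combined with the inequality $\|x\|\leq c\,\|Tx\|$ noted before Lemma~\ref{lm2.1}.

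For uniqueness, I would let $v$ be the difference of two solutions satisfying the hypotheses; then $v$ is $C^2$, satisfies $-\Delta v+\mu v=0$ on $\mathbb{R}^m$, and tends uniformly to zero as $\|x\|\to\infty$. For $\mu>0$ the elliptic maximum principle applied on balls $\|x\|\leq R$, letting $R\to\infty$, forces $v\equiv 0$; for $\mu=0$ the same conclusion follows from the strong maximum principle for harmonic functions together with the uniform decay hypothesis, which prevents $v$ from being a nonzero constant.

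The main obstacle I anticipate is the limit case $\mu=0$. There the integrability hypothesis (\ref{eq2.12}) must do double duty, guaranteeing both that $U\in\B_0(\mathbb{R}^n)$ and that $U\in D(\L)$, and it is precisely this hypothesis that absorbs the potential singularity of $\fourier{F}/\|T^t\omega\|^2$ near the origin. Apart from that point, the proof reduces to a careful accounting of integrability conditions needed to interchange Fourier inversion, differentiation, and multiplication by the symbol $\|T^t\omega\|^2$.
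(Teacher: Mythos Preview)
Your proposal is correct and follows essentially the same route as the paper's proof: the paper simply says that $u$ being a classical solution ``follows from the remarks above'' (i.e., the discussion of $D(\L)$, Lemma~\ref{lm2.1}, and identity (\ref{eq2.8}) that you invoke explicitly), that the decay at infinity comes from the Riemann-Lebesgue theorem, and that uniqueness is a consequence of the maximum principle. You have merely unpacked these references into the concrete integrability checks $|\fourier{U}|\leq|\fourier{F}|/\mu$ and $\|T^t\omega\|^2|\fourier{U}|\leq|\fourier{F}|$, and separated the uniqueness argument into the cases $\mu>0$ and $\mu=0$; nothing is missing or different in substance.
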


\begin{proof}
That the trace $u$ is a classical solution 
of the equation (\ref{eq2.11}) follows from 
the remarks above, and that $u$ vanishes at 
infinity by the already discussed reasons
from the Riemann-Lebesgue theorem. The 
maximum principle ensures that no further 
solution of the equation (\ref{eq2.11}) 
exists that vanishes at infinity.
\qed
\end{proof}

From now on, the equation (\ref{eq2.9}) will  
replace the original equation (\ref{eq2.11}).
Our aim is to compute its solution 
(\ref{eq2.10}) iteratively by polynomial
accelerated versions of the basic iteration
(\ref{eq1.8}). The convergence properties 
of this iteration depend decisively on
the directional behavior of the values
$\|T^t\omega\|^2$, which will be studied in 
the next section, before we return to the 
equation and its iterative solution.

Before we continue with these considerations 
and turn our attention to the directional 
behavior of these values, we introduce 
the norms with respect to which we will 
show convergence. The starting point is the 
radial-angular decomposition 
\begin{equation}    \label{eq2.13}
\int_{\mathbb{R}^n}f(x)\dx\,= 
\int_{S^{n-1}}\bigg(\int_0^\infty\!f(r\eta)r^{n-1}\dr\bigg)\deta
\end{equation}
of the integrals of functions in $L_1(\mathbb{R}^n)$ 
into an inner radial and an outer angular part. 
Inserting the characteristic function of the unit 
ball, one recognizes that the area of the 
$n$-dimensional unit sphere $S^{n-1}$ is $n\nu_n$, 
with $\nu_n$ the volume of the unit ball. If $f$ is 
rotationally symmetric, $f(r\eta)=f(re)$ holds for 
every $\eta\in S^{n-1}$ and every fixed, arbitrarily 
given unit vector $e$.  In this case, 
(\ref{eq2.13}) simplifies to
\begin{equation}    \label{eq2.14}
\int_{\mathbb{R}^n}f(x)\dx\,=\,n\nu_n\int_0^\infty\!f(re)r^{n-1}\dr.
\end{equation}

The norms split into two groups and are labeled 
by a smoothness parameter. Like the differential 
operator (\ref{eq2.7}) itself, they depend on 
the matrix $T$ under consideration. In a first 
step, we assign the function
$\phi:S^{n-1}\to\mathbb{R}$ with the values
\begin{equation}    \label{eq2.15}
\phi(\eta)=n\nu_n\int_0^\infty\|T^t\eta\|^s\,|\fourier{U}(r\eta)|\,r^{s+n-1}\dr
\end{equation}
to a given integrable function $U:\mathbb{R}^n\to\mathbb{R}$
and a given real smoothness parameter $s$. 
The two norms of $U$ are then defined via 
\begin{equation}    \label{eq2.16}
\|U\|_s=\frac{1}{n\nu_n}\int_{S^{n-1}}\phi(\eta)\deta, \quad
\tnorm{U}_s^2=\frac{1}{n\nu_n}\int_{S^{n-1}}\phi(\eta)^2\deta.
\end{equation}
If the norm $\tnorm{U}_s$ of $U$ is finite, then 
the norm $\|U\|_s$ is also finite. In this case, 
the estimate $\|U\|_s\leq\tnorm{U}_s$ holds.  
This follows from the Cauchy-Schwarz inequality. 
In cartesian coordinates, the first of the two
 norms takes the simpler form 
\begin{equation}    \label{eq2.17}
\|U\|_s=\int_{\mathbb{R}^n}\|T^t\omega\|^s\,|\fourier{U}(\omega)|\domega.
\end{equation}
The traces $u(x)=U(Tx)$ of functions $U$ with finite 
norms $\|U\|_0$ and $\|U\|_s$ for a real number 
$s\geq 1$ possess continuous partial derivatives 
of order $k$, $k\leq s$, which can be bounded in 
terms of the then also finite norms $\|U\|_k$ and 
vanish at infinity.

A direct consequence of the definitions are 
the following regularity theorems.

\begin{theorem}     \label{thm2.2}
For right-hand sides $F$ with finite norms $\|F\|_s$ 
and $\tnorm{F}_s$, respectively, the solution 
\rmref{eq2.10} of the equation \rmref{eq2.9} 
satisfies the estimates
\begin{equation}    \label{eq2.18}
\|U\|_{s+2}\leq\|F\|_s, \quad 
\tnorm{U}_{s+2}\leq\tnorm{F}_s
\end{equation}
for all smoothness parameters $s$, independent 
of $\mu\geq 0$. If $\mu=0$, equality holds.
\end{theorem}
This simply results from the Fourier representation
of the solution and the estimate
\begin{equation}    \label{eq2.19}
\frac{1}{r^2\|T^t\eta\|^2+\mu}\leq
\frac{1}{\|T^t\eta\|^2}\,\frac{1}{r^2}.
\end{equation}
If $\mu>0$, the left-hand side can be estimated 
by $1/\mu$ instead. This leads to

\begin{theorem}     \label{thm2.3}
For right-hand sides $F$ with finite norms $\|F\|_s$ 
and $\tnorm{F}_s$, respectively, the solution 
\rmref{eq2.10} of the equation \rmref{eq2.9} 
satisfies the estimates
\begin{equation}    \label{eq2.20}
\|U\|_s\leq\frac{1}{\mu}\,\|F\|_s, \quad 
\tnorm{U}_s\leq\frac{1}{\mu}\,\tnorm{F}_s
\end{equation}
for all smoothness parameters $s$, provided 
$\mu$ is strictly positive.
\end{theorem}


\setcounter{section}{2}

\section{The angular distribution}
\label{sec3}

\setcounter{equation}{0}
\setcounter{theorem}{0}

\setcounter{figure}{0}

The purpose of this section is a very detailed  
study of the angular distribution of the values 
$\|T^t\omega\|^2$, based on our previously 
introduced probability measure
\begin{equation}    \label{eq3.1}
\P(M)=\frac{1}{n\nu_n}\int_{M\cap S^{n-1}}\!\deta
\end{equation}
on the Borel subsets $M$ of the $\mathbb{R}^n$
that for a sector measures its opening angle. 
In terms of this probability measure, the
angular distribution of these values is
\begin{equation}    \label{eq3.2}
F(\delta)={\P\big(\big\{\omega\in\mathbb{R}^n\,\big|\,
\|T^t\omega\|^2\leq\delta\|\omega\|^2\big\}\big)}
\end{equation}
or, after restriction to the unit sphere 
$\|\eta\|=1$ itself, 
\begin{equation}    \label{eq3.3}
F(\delta)={\P\big(\big\{\eta\in S^{n-1}\,\big|\,
\|T^t\eta\|^2\leq\delta\big\}\big)}.
\end{equation}
The direct calculation of such and similar 
quantities is difficult. We therefore reduce 
the calculation of corresponding integrals 
over the unit sphere to the calculation of 
simpler volume integrals. Our main tool is the 
radial-angular decomposition (\ref{eq2.13}).

\begin{lemma}       \label{lm3.1}
If the function $\chi:\mathbb{R}^n\to\mathbb{R}$ 
is positively homogeneous of nonnegative 
degree $\ell$, the integral of $\chi$ over the 
unit sphere is equal to the volume integral
\begin{equation}    \label{eq3.4}
\frac{1}{n\nu_n}\int_{S^{n-1}}\chi(\eta)\deta =
C(\ell)\int_{\mathbb{R}^n}\chi(\omega)W(\omega)\domega,
\end{equation}
where the rotationally symmetric function 
$W=W_n$ is the normed Gauss function
\begin{equation}    \label{eq3.5}
W(\omega)=\left(\frac{1}{\sqrt{\pi}}\right)^n
\exp\big(-\|\omega\|^2\big),
\end{equation}
which splits into a product of one-dimensional 
functions, and the prefactor is
\begin{equation}    \label{eq3.6}
C(\ell)=\frac{\Gamma(n/2)}{\Gamma((n+\ell)/2)}.
\end{equation}
\end{lemma}

\begin{proof}
This follows immediately from the radial angular 
decomposition of volume integrals into an inner 
radial and an outer angular part, the identities
\begin{displaymath} 
\nu_n=\frac{2}{n}\,\frac{\pi^{n/2}}{\Gamma(n/2)},
\quad \int_0^\infty t^j\e^{-t^2}\dt =
\frac{1}{2}\,\Gamma\left(\frac{j+1}{2}\right),
\end{displaymath}
and the homogeneity $\chi(r\eta)=r^\ell\chi(\eta)$ 
of the function under consideration.
\qed
\end{proof}

Let $H$ be the Heaviside function, with the values 
$H(t)=0$ for $t<0$ and $H(t)=1$ for $t\geq 0$, and let 
$\chi(\omega,\delta)=H(\delta\|\omega\|^2-\|T^t\omega\|^2)$. 
As $\chi(\omega,\delta)$ is homogeneous of degree zero
as a function of $\omega$, the distribution (\ref{eq3.2})
possesses the representation
\begin{equation}    \label{eq3.7}
F(\delta)=
\int_{\mathbb{R}^n}\chi(\omega,\delta)W(\omega)\domega
\end{equation}
by Lemma~\ref{lm3.1}. It depends only on the singular 
values of the matrix $T$. This can be shown by means 
of the singular value decomposition of $T$ and the 
transformation theorem for volume integrals. Since the 
expression $\chi(\omega,\delta)$ is right-continuous 
as a function of $\delta$, the distribution is 
right-continuous by the dominated convergence theorem. 
For dimensions $n>m+2$, it has a representation
\begin{equation}    \label{eq3.8}
F(\delta)=\int_{-\infty}^\delta f(t)\dt
\end{equation}
with a density $f$ that attains values $f(t)>0$
on the interval $0<t<\|T^t\|^2$ and $f(t)=0$ 
outside of it and shows a characteristic, 
dimension-dependent behavior near $t=0$. 

\begin{theorem}     \label{thm3.1}
If $n-m>2$, the distribution 
\rmref{eq3.2} possesses the density function 
$f$ that vanishes for $t\leq 0$ and is, for 
arguments $t>0$, given by
\begin{equation}    \label{eq3.9}
f(t)=\frac{1}{B(\a,\b)}
\frac{1}{m\nu_m}\int_{S^{m-1}}\!
\frac{t^{\a-1}R(1-\,t\,\|\Sigma_0^{-1}\eta\|^2)^{\b-1}}
{\det\Sigma_0}\,\deta, 
\end{equation}
where $\Sigma_0=\mathrm{diag}(\sigma_1,\ldots,\sigma_m)$ 
is the $(m\times m)$-diagonal matrix with the singular 
values of the matrix $T$ on its diagonal, the 
coefficients $\a$ and $\b$ are
\begin{equation}    \label{eq3.10}
\a=\frac{m}{2}, \quad \b=\frac{n-m}{2},
\end{equation}
the function $R$ takes the values $R(t)=\max(0,t)$, 
and $B$ is Euler's beta function. 
\end{theorem}

\begin{proof}
Because the distribution depends only on the 
singular values of the $T$, we can assume that
$T^t=(\Sigma_0\;0)$ is itself a diagonal matrix. 
In the following, we split the vectors 
$\omega\in\mathbb{R}^n$ into parts 
$x\in\mathbb{R}^m$ and $y\in\mathbb{R}^{n-m}$. 
Let $a(x,\delta)\geq 0$, $\delta>0$, be given by
\begin{displaymath}
a(x,\delta)^2=
R\bigg(\frac{\|\Sigma_0 x\|^2-\delta\|x\|^2}{\delta}\bigg).
\end{displaymath}
Since $\|\Sigma_0 x\|^2\leq\delta(\|x\|^2+\|y\|^2)$
holds if and only if $\|y\|-a(x,\delta)\geq 0$,
the distribution can be written as a double 
integral as follows
\begin{displaymath}
F(\delta)=\int_{\mathbb{R}^m}\bigg(\int_{\mathbb{R}^{n-m}}
H\big(\|y\|-a(x,\delta)\big)W_{n-m}(y)\dy\bigg)W_m(x)\dx.
\end{displaymath}
This results from its representation (\ref{eq3.7})
and Fubini's theorem. By (\ref{eq2.14}), the inner 
integral reduces to the one-dimensional integral
\begin{displaymath}
\frac{2}{\Gamma((n-m)/2)}\int_0^\infty 
H\big(r-a(x,\delta)\big)\e^{-r^2}r^{(n-m)-1}\dr.
\end{displaymath}
Introducing the function
\begin{displaymath}
\phi(s)=\frac{2}{\Gamma((n-m)/2)}
\int_s^\infty r^{(n-m)-1}\e^{-r^2}\dr,
\end{displaymath}
the distribution therefore takes the form
\begin{displaymath}
F(\delta)=\int_{\mathbb{R}^m}\phi(a(x,\delta))W_m(x)\dx.
\end{displaymath}
Let $x\neq 0$. Because 
$a(x,t)=(\|\Sigma_0 x\|^2/t-\|x\|^2)^{1/2}>0$ 
for $0<t<\|\Sigma_0 x\|^2/\|x\|^2$, the function 
$t\to\phi(a(x,t))$ is differentiable on this 
interval. There it has the derivative
\begin{displaymath}
h(x,t)=\frac 
{a(x,t)^{(n-m)-2}\exp(-\,a(x,t)^2)\|\Sigma_0 x\|^2}
{\Gamma((n-m)/2)\,t^2}.
\end{displaymath}
Because $n-m>2$, this derivative tends to zero as $t$ 
goes to $\|\Sigma_0 x\|^2/\|x\|^2$ from the left.
Because $a(x,t)=0$ for $\|\Sigma_0 x\|^2/\|x\|^2\leq t$, 
$\phi(a(x,t))$ is therefore continuously differentiable 
as a function of $t$ on the entire positive real axis 
$t>0$ and its derivative has the above representation. 
The same applies in the case $x=0$. For 
$0<\delta_0<\delta$
\begin{displaymath}
F(\delta)-F(\delta_0)=\int_{\mathbb{R}^m}
\bigg(\int_{\delta_0}^\delta h(x,t)\dt\bigg)W_m(x)\dx
\end{displaymath}
follows, or, after interchanging the order 
of integration,
\begin{displaymath}
F(\delta)-F(\delta_0)=\int_{\delta_0}^\delta f(t)\dt,
\end{displaymath}
where the density for arguments $t>0$ 
is given by
\begin{displaymath}
f(t)=\int_{\mathbb{R}^m} h(x,t)W_m(x)\dx.
\end{displaymath}
Because the exponential terms partially cancel 
each other out, it reads explicitly
\begin{displaymath}
f(t)=\int_{\mathbb{R}^m}\frac
{R(\|Ax\|^2-\|x\|^2)^{(n-m)/2-1}\|Ax\|^2}
{\Gamma((n-m)/2)\,t}\;W_m(Ax)\dx
\end{displaymath}
in terms of the matrix $A=\Sigma_0/\sqrt{t}$,
or, after a change of variables, 
\begin{displaymath}
f(t)=\int_{\mathbb{R}^m}\frac
{R(\|x\|^2-\|A^{-1} x\|^2)^{(n-m)/2-1}\|x\|^2}
{\Gamma((n-m)/2)\,t\,|\det A|}\;W_m(x)\dx.
\end{displaymath}
Using Lemma~\ref{lm3.1}, $|\det A|=t^{-m/2}\det\Sigma_0$,
and the representation
\begin{displaymath}   
B(\a,\b)=\frac{\Gamma(\a)\Gamma(\b)}{\Gamma(\a+\b)}
\end{displaymath}
of the beta function in terms of the gamma 
function, this volume integral can finally 
be converted into the surface integral 
(\ref{eq3.9}).

Since the function (\ref{eq3.9}) is locally 
integrable and the distribution function
(\ref{eq3.7}) is right-continuous, one can 
let $\delta_0$ tend to zero in the relation 
above. This leads to
\begin{displaymath}
F(\delta)-F(0)=\int_0^\delta f(t)\dt
\end{displaymath}
for $\delta>0$. Since the kernel of the 
$(m\times n)$-matrix $T^t$ is as an 
$(n-m)$-dimensional subspace a set of 
volume measure zero, $F(0)=0$ follows 
from the representation (\ref{eq3.7})
of the distribution. This concludes
the proof.
\qed
\end{proof}

A function $\eta\to h(\|T^t\eta\|^2)$ is integrable 
over the unit sphere $S^{n-1}$ if and only if the 
function $t\to h(t)f(t)$ is integrable over the real 
axis. If this is the case, 
\begin{equation}    \label{eq3.11}
\frac{1}{n\nu_n}\int_{S^{n-1}}h\big(\|T^t\eta\|^2\big)\deta 
=\int_{-\infty}^\infty h(t)f(t)\dt
\end{equation}
holds and the high-dimensional integral over the 
unit sphere reduces to an integral in one space 
dimension. As follows from (\ref{eq3.9}), the 
quantities
\begin{equation}    \label{eq3.12}
\frac{1}{\|T^t\eta\|^s}, \quad s>0,
\end{equation}
are therefore integrable over the unit sphere
if and only if $s$ is less than $m$.

\begin{figure}[t]   \label{fig1}
\includegraphics[width=0.93\textwidth]{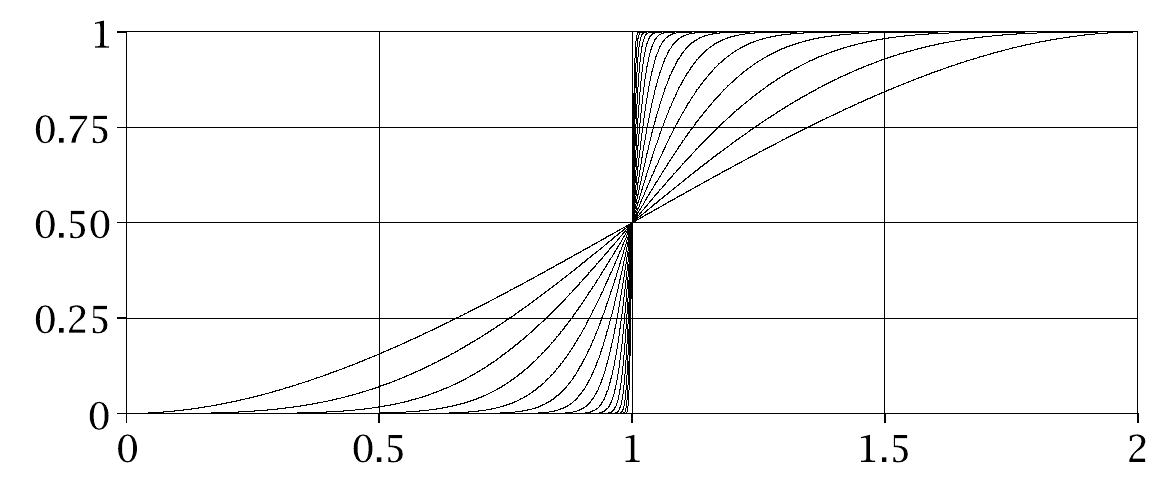}
\caption{The probability distributions 
 assigned to the densities \rmref{eq3.13} 
 for $m=2^k$, $k=1,\ldots,16$,  and $n=2m$}
\end{figure}

If the matrix $\Sigma_0=\sigma I_m$ is a scalar
multiple the identity matrix, the integral
(\ref{eq3.9}) can be evaluated explicitly.
In this case, the density is
\begin{equation}    \label{eq3.13}
f(t)=\frac{1}{\sigma^2}\,g\Big(\frac{t}{\sigma^2}\Big), 
\quad
g(t)=\frac{1}{B(\a,\b)}\;t^{\a-1}(1-t)^{\b-1},
\end{equation}
on the interval $0<t<\sigma^2$ and vanishes 
outside this interval. That is, the distribution 
is a rescaled variant of a beta distribution.
For high dimensions, it therefore behaves
almost like a normal distribution.
Figure~1 shows the assigned distributions for 
the cases $\sigma^2=n/m$, $m=2^k$, 
$k=1,\ldots,16$, and $n=2m$. The higher the 
dimensions are, the more the distributions 
approach the step function that jumps at
$\delta_0=1$ from zero to one and the more 
the values $\|T^t\eta\|^2$, $\|\eta\|=1$, 
cluster around one.

Something similar can be observed in the general 
case. This is reflected in the variances of the 
distributions. They tend with very high probability 
to zero as the dimensions increase. The expected 
value and the variance of the distributions are 
\begin{equation}    \label{eq3.14}    
E=\frac{1}{n\nu_n}\int_{S^{n-1}}\!\|T^t\eta\|^2\deta, 
\quad
V=\frac{1}{n\nu_n}\int_{S^{n-1}}\!\big(\|T^t\eta\|^2-E\big)^2\deta
\end{equation}
and play a fundamental role in the further 
considerations.

\begin{lemma}       \label{lm3.2}
The expected value and the variance \rmref{eq3.14} 
depend only on the singular values 
$\sigma_1,\ldots,\sigma_m$ of the matrix $T$. 
In terms of the power sums
\begin{equation}    \label{eq3.15}
A_1=\sum_{i=1}^m\sigma_i^2, \quad
A_2=\sum_{i=1}^m\sigma_i^4
\end{equation}
of order one and two of the squares of the
singular values, they read as follows
\begin{equation}    \label{eq3.16}
E=\frac{A_1}{n}, \quad 
V=\frac{2n\,A_2-2A_1^2}{n^2(n+2)}.
\end{equation}   
\end{lemma}

\begin{proof}
Since the distribution, and with that its  moments, 
depend only on the singular values of $T$, we can 
again assume that $T^t=(\Sigma_0\;0)$ is a diagonal 
matrix with the singular values on its diagonal. 
Lemma~\ref{lm3.1} then leads to the representation
\begin{displaymath}
\frac{1}{n\nu_n}\int_{S^{n-1}}\big(\|T^t\eta\|^2)^k\deta
=C(2k)\int_{\mathbb{R}^n}
\bigg(\sum_{i=1}^m\!\sigma_i^2\omega_i^2\bigg)^k 
W(\omega)\domega
\end{displaymath}
of the moment of order $k$ as a homogeneous symmetric 
polynomial of degree $k$ in the~$\sigma_i^2$. The 
volume integral on the right-hand side splits into 
a sum of products of one-dimensional integrals. In 
principle, it can be calculated this way.
For the moments of order one and two, this is
possible without problems. For higher-order moments, 
the number of terms to be considered separately
increases rapidly. One can then take advantage of the 
fact that the symmetric polynomials are polynomials 
in the power sums of the $\sigma_i^2$; see 
\cite{VanDerWarden}, or \cite{Sturmfels} for 
a more recent treatment.
\qed
\end{proof}

In terms of the normalized singular values 
$\eta_i=\sigma_i/\sqrt{n}$, the expected value and 
the variance  (\ref{eq3.14}) and (\ref{eq3.16}), 
respectively, can be written as follows
\begin{equation}    \label{eq3.17}
E=\sum_{i=1}^m\eta_i^2, \quad
V=\frac{2n}{n+2}\,\sum_{i=1}^m\eta_i^4 \,-\,
\frac{2}{n+2}\,\bigg(\sum_{i=1}^m\eta_i^2\bigg)^2.
\end{equation}
We are interested in matrices $T$ for which the 
expected value $E$ is one, that is, for which the 
vector $\eta$ composed of the normalized singular 
values $\eta_i$ lies on the unit sphere of the 
$\mathbb{R}^m$. The variances $V$ then possess 
the representation
\begin{equation}    \label{eq3.18}
V=\frac{2n}{n+2}\,X(\eta)\,-\,\frac{2}{n+2}, 
\quad X(\eta)=\sum_{i=1}^m\eta_i^4.
\end{equation}
The function $X$ attains the minimum value $1/m$
and the maximum value one on the unit sphere of 
the $\mathbb{R}^m$. The variances (\ref{eq3.18}) 
therefore extend over the interval
\begin{equation}    \label{eq3.19}
\frac{n-m}{n+2}\,\frac{2}{m}\leq V<\frac{2n-2}{n+2}.
\end{equation}
However, they are most likely of 
the order $\mathcal{O}(1/m)$.

\begin{lemma}       \label{lm3.3}
Let the vectors $\eta$ composed of the normalized 
singular values $\eta_i$ be uniformly distributed 
on the part of the unit sphere consisting of 
points with strictly positive components. Then 
the expected value and the variance of $X$ are   
\begin{equation}    \label{eq3.20}
\mathbb{E}(X)=\frac{3}{m+2}, \quad 
\mathbb{V}(X)=\frac{24m-24}{(m+2)^2(m+4)(m+6)}.
\end{equation}
\end{lemma}

\begin{proof}
For symmetry reasons and because the intersections 
of lower-dimensional subspaces with the unit sphere 
have measure zero, we can allow points $\eta$ that 
are uniformly distributed on the whole unit sphere. 
The expected value and the variance of the function 
$X$, treated as a random variable, are therefore
\begin{displaymath}    
\mathbb{E}(X)=\frac{1}{m\nu_m}\int_{S^{m-1}}X(\eta)\deta,
\quad \mathbb{V}(X)=
\frac{1}{m\nu_m}\int_{S^{m-1}}\big(X(\eta)-\mathbb{E}(X)\big)^2\deta
\end{displaymath}
and can be calculated along the lines given 
by Lemma~\ref{lm3.1}. 
\qed
\end{proof}
It is instructive to express the variance
(\ref{eq3.18}) in terms of the random variable
\begin{equation}    \label{eq3.21}
\widetilde{X}(\eta)=\frac{m+2}{3}\,X(\eta),
\end{equation}
which is rescaled to the expected value
$\mathbb{E}(\widetilde{X})=1$. Its variance
\begin{equation}    \label{eq3.22}   
\mathbb{V}(\widetilde{X})=
\frac{8}{3}\,\frac{m-1}{(m+4)(m+6)}
\end{equation}
tends to zero as $m$ goes to infinity. This 
not only means that the expected value 
\begin{equation}    \label{eq3.23}
V^*=\,\frac{2n}{n+2}\,\frac{3}{m+2}\,-\,\frac{2}{n+2}
\end{equation}
of the variances (\ref{eq3.18}) tends to zero 
as the dimension $m$ increases, but also that 
the variances increasingly cluster around their 
expected value as $m$ increases. This observation 
is supported by simple experiments. Uniformly 
distributed points on the unit sphere $S^{m-1}$ 
can be generated from vectors in $\mathbb{R}^m$ 
with independent, standard normally distributed
components. Such vectors themselves follow the 
standard normal distribution in the 
$m$-dimensional space. Scaling them to length 
one gives the desired uniformly distributed 
points on the unit sphere. This allows one to 
sample the random variable $X$ for any given 
dimension $m$.

The expected value and the variance (\ref{eq3.14}) 
can be expressed directly in terms of the entries 
of the $(m\times m)$-matrix $S=T^tT$, since the 
power sums (\ref{eq3.15}) are the traces
\begin{equation}    \label{eq3.24}
A_1=\sum_{i=1}^mS_{ii}, \quad
A_2=\sum_{i,j=1}^mS_{ij}^2
\end{equation}
of the matrices $S$ and $S^2$, and can therefore
be computed without recourse to the singular 
values of $T$.
Consider the $(n\times m)$-matrix $T$ assigned 
to an arbitrarily given undirected graph with 
$m$ vertices and $n-m$ edges that maps the 
components $x_i$ of a vector $x\in\mathbb{R}^m$ 
first to themselves and then to the $n-m$ 
weighted differences\footnote{The square root 
is important. Why, is explained in Sect.~\ref{sec4}.}
\begin{equation}    \label{eq3.25}
\frac{x_i-x_j}{\sqrt{2}}, \quad i<j,
\end{equation}
assigned to the edges of the graph connecting the 
vertices $i$ and $j$. In quantum physics, matrices 
of the given kind can be associated with the 
interaction of particles. In the given case, 
the matrix $S$ has the form $S=I+L/2$, where 
$I$ is the identity matrix and $L$ is the Laplacian 
matrix of the graph. The off-diagonal entries of 
$L$ are $L_{ij}=-1$ if the vertices $i$ and $j$ 
are connected by an edge and $L_{ij}=0$ otherwise. 
The diagonal entries $L_{ii}=d_i$ are the vertex 
degrees, the numbers of edges that meet at the 
vertices $i$.

\begin{lemma}       \label{lm3.4}
For a matrix $T$ that is assigned to an undirected 
graph with $m$ vertices as described above, the 
expected value and the variance \rmref{eq3.14} are
\begin{equation}    \label{eq3.26}
E=1,\quad 
V=\frac{2g_2+6g_1}{(g_1^2+4g_1+4)\,m+(4 g_1+8)},
\end{equation}
where $g_1$ and $g_2\geq g_1^2$ are the mean 
values of the vertex degrees and their squares:
\begin{equation}    \label{eq3.27}
g_1=\frac{1}{m}\sum_{i=1}^m d_i, \quad
g_2=\frac{1}{m}\sum_{i=1}^m d_i^2.
\end{equation}
\end{lemma}

\begin{proof}
By (\ref{eq3.24}), in the given case the constants 
(\ref{eq3.15}) possess the representation
\begin{displaymath}
A_1=\frac{g_1+2}{2}\,m,\quad
A_2=\frac{g_2+5g_1+4}{4}\,m
\end{displaymath}
and it is $n-m=mg_1/2$. Therefore, the proposition 
follows from (\ref{eq3.16}).
\qed
\end{proof}
For the matrices assigned to a family of graphs
for which the above two mean values remain bounded 
independent of the number of vertices, the variance 
tends to zero as the number of vertices goes to 
infinity. The matrices assigned to graphs whose 
vertices up to one are connected with a designated 
central vertex, but not with each other, form the 
other extreme. For these matrices, the variances 
decrease to a limit value greater than zero as 
the number of vertices increases. However, such 
matrices are a rare exception, not only with 
respect to the above random matrices, but also 
in the context of matrices assigned to graphs. 
Because of
\begin{equation}    \label{eq3.28}
V<\frac{2g_2}{g_1^2}\,\frac{1}{m},
\end{equation}
the inequality $V\geq\delta$ implies the lower
bound $g_2/g_1^2>m\delta/2$. Consider the random 
graphs with a fixed number $m$ of vertices that 
are with a given probability $p$ connected by 
an edge, or those with a correspondingly given 
number of edges. Sampling the variances assigned 
to a large number of graphs in such a class, one 
sees that these variances do not exceed the value 
$2/m$ with a high probability. 

With the exception of a few extreme cases, it can 
be observed that the distribution of the values 
$\|T^t\eta\|^2$ for points $\eta$ on the unit 
sphere of $\mathbb{R}^n$ more and more approaches 
the normal distribution with the expected value 
and the variance (\ref{eq3.14}) as the dimensions 
increase, a fact that underlines the importance 
of these quantities. This can be verified by 
evaluating the expression $\|T^t\eta\|^2$ at a 
large number of independent and uniformly 
distributed points $\eta$ on the unit sphere 
and comparing the frequency distribution of 
the resulting values with the given Gauss 
function
\begin{equation}    \label{eq3.29}
\frac{1}{\sqrt{2\pi V}}\,
\exp\left(-\,\frac{(\,t-E)^2}{2V}\,\right).
\end{equation}

Since the distribution depends only on the
singular values of the matrix $T$, one may assume 
that $T^t=(\Sigma_0\;0)$ is a diagonal matrix, with 
the singular values of $T$ on the diagonal of the 
square matrix $\Sigma_0$. Given the above remarks
about generating such points, to sample the values 
$\|T^t\eta\|^2$ for a large number of independent, 
uniformly distributed points $\eta\in S^{n-1}$ 
then means to sample the ratio
\begin{equation}    \label{eq3.30}
\frac{\|\Sigma_0x\|^2}{\|x\|^2+\|y\|^2}
\end{equation}
for a large number of vectors $x\in\mathbb{R}^m$ 
and $y\in\mathbb{R}^{n-m}$ with independent and 
standard normally distributed components. The 
squares of the euclidean norms $\|y\|$ then follow 
the $\chi^2$-distribution with $n-m$ degrees of 
freedom. Therefore, their calculation can be 
replaced by the calculation of a single scalar 
quantity. The amount of work then remains 
proportional to the dimension $m$, no matter 
how much the dimensions differ and how large 
their difference $n-m$ is. Let $T$ be the matrix 
assigned to the graph associated with the 
$\mathrm{C}_{60}$-fullerene molecule, which 
consists of the ninety edges of a truncated 
icosahedron and its sixty corners as vertices. 
The degree of these vertices is three and the 
assigned variance therefore $V=9/380$. The 
frequency distribution of the values 
$\|T^t\eta\|^2$ for a million randomly chosen 
points $\eta$ on the unit sphere and the 
corresponding Gauss function (\ref{eq3.29}) 
are shown in Fig.~2.

\begin{figure}[t]   \label{fig2}
\includegraphics[width=0.93\textwidth]{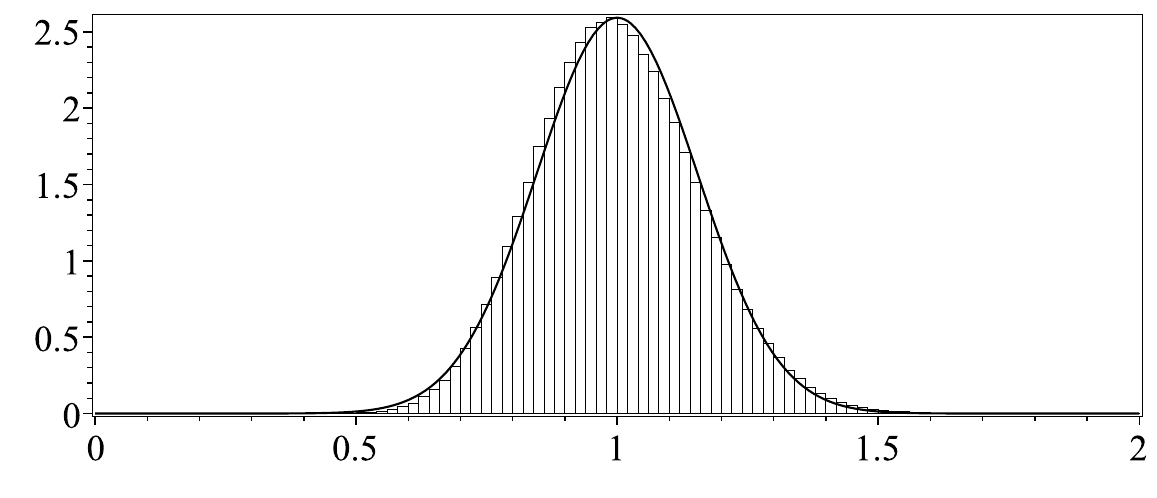}
\caption{The frequency distribution of the values 
$\|T^t\eta\|^2$ for the matrix $T$ associated 
with the $\mathrm{C}_{60}$-molecule}
\end{figure}


\setcounter{section}{3}

\section{The iterative procedure}
\label{sec4}

\setcounter{equation}{0}
\setcounter{theorem}{0}

\setcounter{figure}{2}

Now we are ready to analyze the iterative method
\begin{equation}    \label{eq4.1}
U_0=0, \quad
U_{k+1}=\,U_k\,-\,(-\Delta+\mu)^{-1}(\L U_k+\mu U_k-F)
\end{equation}
presented in the introduction and its polynomially 
accelerated counterpart, respectively, for the 
solution of the equation (\ref{eq2.9}), the 
equation that has replaced the original equation 
(\ref{eq2.11}). The iteration error possesses 
the Fourier representation
\begin{equation}    \label{eq4.2}
(\fourier{U}-\fourier{U}_k)(\omega)=
P_k\big(\alpha(\omega)(\|T^t\omega\|^2+\mu)\big)\fourier{U}(\omega),
\end{equation}
where $U$ is the exact solution (\ref{eq2.10}) 
of the equation (\ref{eq2.9}), $\alpha(\omega)$ 
is given by
\begin{equation}    \label{eq4.3}
\alpha(\omega)=\frac{1}{\|\omega\|^2+\mu},
\end{equation}
and the functions $P_k(\lambda)$ are polynomials 
of order $k$ with value $P_k(0)=1$. Throughout 
this section, we assume that the expression 
$\|T^t\eta\|^2$ possesses the expected value one. 
We restrict ourselves to the analysis of this 
iteration in the spaces of the functions with 
finite norms (\ref{eq2.17}). In parts, this 
analysis can be transferred to the Hilbert 
spaces $H^s$.

\begin{theorem}     \label{thm4.1}
If the solution $U$ possesses a finite norm $\|U\|_s$, 
this also holds for the iterates $U_k$. For all 
coefficients $\mu\geq 0$, the corresponding norm of 
the error, given by
\begin{equation}    \label{eq4.4}
\|U-U_k\|_s=\int\|T^t\omega\|^s\,
\big|P_k\big(\alpha(\omega)(\|T^t\omega\|^2+\mu)\big)\fourier{U}(\omega)\big|
\domega,
\end{equation}
then tends to zero for suitably chosen 
polynomials $P_k$ as $k$ goes to infinity.
\end{theorem}

\begin{proof}
Because the expression $\|T^t\eta\|^2$ possesses the 
expected value one, the spectral norm of the matrix
$T^t$ attains a value $\|T^t\|\geq 1$. 
If one sets $\vartheta=1/\|T^t\|^2$,
\begin{displaymath}
0\leq 1-\vartheta\alpha(\omega)(\|T^t\omega\|^2+\mu)<1
\end{displaymath}
therefore holds for all $\omega$ outside the kernel 
of $T^t$, as a subspace of a dimension less than $n$ 
a set of measure zero. Choosing 
$P_k(\lambda)=(1-\vartheta\lambda)^k$,
the proposition thus follows from the dominated 
convergence theorem.
\qed
\end{proof}

Of course, one would like to have more than 
just convergence. The next theorem is a
first step in this direction.

\begin{theorem}     \label{thm4.2}
Let $0<\delta<1$, $a=\delta$, $b=\|T^t\|^2$, and
$\kappa=b/a$ and let 
\begin{equation}    \label{eq4.5}       
P_k(\lambda)=T_k\bigg(\dfrac{b+a-2\lambda}{b-a}\bigg)
\bigg/T_k\bigg(\dfrac{b+a}{b-a}\bigg)
\end{equation} 
be the to the interval $a\leq\lambda\leq b$ transformed 
Chebyshev polynomial $T_k$ of the first kind of degree 
$k$. The norm of the iteration error \rmref{eq4.2} 
then satisfies the estimate
\begin{equation}    \label{eq4.6}   
\|U-U_k\|_s\leq
\frac{2q^k}{1+q^{2k}}\,\|U\|_s+\|U-U_\delta\|_s,
\quad
q=\frac{\sqrt{\kappa}-1}{\sqrt{\kappa}+1},
\end{equation}
where $\fourier{U}_\delta$ takes the same values as 
$\fourier{U}$ on the set of all $\omega$ for which
\begin{equation}    \label{eq4.7}
\|T^t\omega\|^2>\delta\|\omega\|^2
\end{equation}
holds and vanishes outside this set.
\end{theorem}

\begin{proof}
It is $a\leq\alpha(\omega)(\|T^t\omega\|^2+\mu)\leq b$
if (\ref{eq4.7}) holds and 
$0\leq\alpha(\omega)(\|T^t\omega\|^2+\mu)\leq b$ 
on the entire $\mathbb{R}^n$. Therefore, the 
proposition follows from the estimates
\begin{displaymath}
|P_k(\lambda)|\leq 1
\text{\;for $0\leq \lambda\leq a$},
\quad
|P_k(\lambda)|\leq\frac{2q^k}{1+q^{2k}}
\text{\;for $a\leq \lambda\leq b$}
\end{displaymath}
for the values of the Chebyshev polynomial 
(\ref{eq4.5}) on the interval $0\leq\lambda\leq b$.
\qed
\end{proof}
Depending on the size of $\kappa=\|T^t\|^2/\delta$,
the norm of the error $U-U_k$ soon reaches the size 
of the norm of $U-U_\delta$. The idea behind the 
estimate (\ref{eq4.6}) is that in high space dimensions 
the condition (\ref{eq4.7}) is satisfied for nearly 
all $\omega$ and that the part $U-U_\delta$ of the 
solution $U$ is therefore negligible. Let 
$\phi:S^{n-1}\to\mathbb{R}$ be the integrable 
function
\begin{equation}    \label{eq4.8}
\phi(\eta)=\,
n\nu_n\int_0^\infty\|T^t\eta\|^s\,|\fourier{U}(r\eta)|\,r^{s+n-1}\dr
\end{equation}
from the definition (\ref{eq2.16}) of the norms
and let $S(\delta)$ be the sector
\begin{equation}    \label{eq4.9}
S(\delta)=
\big\{\omega\,\big|\|T^t\omega\|^2\leq\delta\|\omega\|^2\big\}.
\end{equation}
With $M=S(\delta)\cap S^{n-1}$, then the remainder 
possesses the representation
\begin{equation}    \label{eq4.10}
\|U-U_\delta\|_s=\frac{1}{n\nu_n}\int_M\phi(\eta)\deta.
\end{equation}
If $\phi$ is square integrable, that is, if the
norm $\tnorm{U}_s$ of the solution $U$ is finite, 
the remainder can be estimated by means of the 
Cauchy-Schwarz inequality. Expressing the area 
measure $\P(M)$ in terms of the density 
(\ref{eq3.9}), this leads to 
\begin{equation}    \label{eq4.11}
\|U-U_\delta\|_s^2\leq\int_0^\delta f(t)\dt\;\tnorm{U-U_\delta}^2_s.
\end{equation}
The right-hand side of this inequality tends at 
least like $\mathcal{O}(\delta^{m/2})$ to zero 
as $\delta$ goes to zero. The speed of convergence 
doubles if the function (\ref{eq4.8}) remains 
bounded. 

The estimate (\ref{eq4.6}) is extremely robust 
in many respects. First, it is based on a 
pointwise estimate of the Fourier transform 
of the error. It is therefore equally valid 
for other Fourier-based norms. Second, the 
function (\ref{eq4.3}) can be replaced by 
any approximation $\widetilde{\alpha}(\omega)$ 
for which, with an appropriate $\varepsilon>0$,
an estimate
\begin{equation}    \label{eq4.12}
0\leq\widetilde{\alpha}(\omega)\leq(1+\varepsilon)\alpha(\omega)
\end{equation}
on the entire frequency space and an 
inverse estimate
\begin{equation}    \label{eq4.13}
(1-\varepsilon)\alpha(\omega)\leq\widetilde{\alpha}(\omega)
\end{equation}
on a sufficiently large spherical shell around 
its origin hold. To see why, note that
\begin{equation}    \label{eq4.14}
|\fourier{U}(\omega)|\leq\frac{1}{\delta}
\,\frac{|\fourier{F}(\omega)|}{\|\omega\|^2}
\end{equation}
holds for the frequency vectors $\omega$ outside 
the sector (\ref{eq4.9}). Therefore, the part of 
the solution associated with the 
$\omega\notin S(\delta)$ outside a sufficiently 
large ball around the origin can be neglected.
In the high dimensions considered, the same holds 
for the part of the solution associated with the 
$\omega$ in a small ball around the origin. The 
contribution of a ball $B(R)$ of radius $R$ 
around the origin to the norm 
(\ref{eq2.17}) is 
\begin{equation}    \label{eq4.15}
\int_{B(R)}\|T^t\omega\|^s\,|\fourier{U}(\omega)|\domega
\,=\,n\nu_n\int_0^R\psi(r)\,r^{s+n-1}\dr,
\end{equation}
where the function $\psi$ is the mean value
\begin{equation}    \label{eq4.16}
\psi(r)=\frac{1}{n\nu_n}\int_{S^{n-1}}
\|T^t\eta\|^s\,|\fourier{U}(r\eta)|\deta.
\end{equation} 
Provided the behavior of this mean value can be 
kept under control, the contribution of the ball 
rapidly decreases as soon as its radius falls 
below a certain bound.

Nevertheless, the estimate from Theorem~\ref{thm4.2}
is rather pessimistic, because it only takes into 
account the decay of the left tail of the distribution, 
but ignores the fast decay of its right tail. 
To see what can be reached, we study the behavior of 
the iteration in the limit $\mu=0$, the case where 
the underlying effects are most clearly brought to 
light. Decomposing the vectors $\omega=r\eta$ into 
a radial part $r\geq 0$ and an angular part 
$\eta\in S^{n-1}$, the error (\ref{eq4.2}) 
propagates frequency-wise as
\begin{equation}    \label{eq4.17}
(\fourier{U}-\fourier{U}_k)(r\eta)=
P_k\bigg(\frac{r^2\|T^t\eta\|^2+\mu}{r^2+\mu}\bigg)
\fourier{U}(r\eta),
\end{equation}
and after the transition to the limit value 
$\mu=0$ as
\begin{equation}    \label{eq4.18}
(\fourier{U}-\fourier{U}_k)(r\eta)=
P_k\big(\|T^t\eta\|^2\big)\fourier{U}(r\eta).
\end{equation} 
In the limit case, therefore, the method acts only 
on the angular part of the error. Nevertheless,
by Theorem~\ref{thm4.1} the iterates converge to 
the solution. To clarify the underlying effects, 
we prove this once again in a different form.

\begin{theorem}     \label{thm4.3}
If the solution $U$ possesses a finite norm $\|U\|_s$, 
this also holds for the iterates $U_k$ implicitly given 
by \rmref{eq4.18}. The norm \rmref{eq2.17} of the 
iteration error then tends to zero for suitably chosen 
polynomials $P_k$ as $k$ goes to infinity.
\end{theorem}

\begin{proof}
In radial-angular representation, the iteration 
error is given by
\begin{displaymath}
\|U-U_k\|_s=\frac{1}{n\nu_n}
\int_{S^{n-1}}\big|P_k\big(\|T^t\eta\|^2\big)\big|
\phi(\eta)\deta
\end{displaymath}
where the integrable function $\phi:S^{n-1}\to\mathbb{R}$ 
is defined by the expression
\begin{displaymath}
\phi(\eta)=\,
n\nu_n\int_0^\infty\|T^t\eta\|^s\,|\fourier{U}(r\eta)|\,r^{s+n-1}\dr.
\end{displaymath}
To prove the convergence of the iterates $U_k$ to 
the solution $U$, we again consider the polynomials 
$P_k(t)=(1-\vartheta t)^k$ with $\vartheta=1/\|T^t\|^2$. 
For $\eta\in S^{n-1}$, it is
\begin{displaymath}
0\leq 1-\vartheta\|T^t\eta\|^2\leq 1,
\end{displaymath}
where the value one is only attained on the
intersection of the $(n-m)$-dimensional kernel 
of the matrix $T^t$ with the unit sphere, 
that is, on a set of area measure zero. The 
convergence again follows from the dominated 
convergence theorem.
\qed
\end{proof}

Under a seemingly harmless additional assumption,
one obtains an estimate for the speed of convergence 
that can hardly be improved.
              
\begin{theorem}     \label{thm4.4}
Under the assumption that the norm $\tnorm{U}_s$ 
of the solution $U$ takes a finite value, the 
iteration error can be estimated as
\begin{equation}    \label{eq4.19}
\|U-U_k\|_s^2\leq 
\int_{-\infty}^\infty P_k(t)^2 f(t)\dt\;\tnorm{U}_s^2
\end{equation}
in terms of the density $f$ of the distribution 
of the values $\|T^t\eta\|^2$ on the unit sphere.
\end{theorem}

\begin{proof}
The proof is based on the same error representation
\begin{displaymath}
\|U-U_k\|_s=\frac{1}{n\nu_n}
\int_{S^{n-1}}\big|P_k\big(\|T^t\eta\|^2\big)\big|
\phi(\eta)\deta
\end{displaymath}
as that in the proof of the previous theorem, 
but by assumption the function 
\begin{displaymath}
\phi(\eta)=\,
n\nu_n\int_0^\infty\|T^t\eta\|^s\,|\fourier{U}(r\eta)|\,r^{s+n-1}\dr
\end{displaymath}
is now square integrable, not only integrable. 
Its correspondingly scaled $L_2$-norm is the 
norm $\tnorm{U}_s$ of the solution. The 
Cauchy-Schwarz inequality thus leads to 
\begin{displaymath}
\|U-U_k\|_s^2\leq 
\frac{1}{n\nu_n}\int_{S^{n-1}}\big|P_k(\|T^t\eta\|^2)\big|^2\deta
\;\tnorm{U}_s^2.
\end{displaymath}
If one rewrites the integral in terms of 
the density $f$, (\ref{eq4.19}) follows.
\qed
\end{proof}

In contrast to Theorem~\ref{thm4.1} and 
Theorem~\ref{thm4.2}, this theorem strongly 
depends on the involved norms. But of course 
one can hope that other error norms behave 
similarly. 

\begin{lemma}       \label{lm4.1}
Let the function $U$ lie in the Hilbert-space $H^s$ 
equipped with the seminorm $|\cdot|_{2,\,s}$. The 
iteration error, for rotationally symmetric functions 
$U$ given by
\begin{equation}    \label{eq4.20}
|U-U_k|_{2,\,s}^2=
\int_{-\infty}^\infty P_k(t)^2 f(t)\dt\;|U|_{2,\,s}^2,
\end{equation}
then tends to zero for suitably chosen polynomials 
$P_k$ as $k$ goes to infinity. 
\end{lemma}

\begin{proof}
In radial-angular representation, the iteration 
error is given by 
\begin{displaymath}
|U-U_k|_{2,\,s}^2=\frac{1}{n\nu_n}
\int_{S^{n-1}}\big|P_k\big(\|T^t\eta\|^2\big)\big|^2
\phi(\eta)\deta,
\end{displaymath}
where the integrable function $\phi:S^{n-1}\to\mathbb{R}$ 
is defined by the expression
\begin{displaymath}
\phi(\eta)\,=\,
n\nu_n\int_0^\infty|\fourier{U}(r\eta)|^2r^{2s+n-1}\dr.
\end{displaymath}
Convergence follows as in the proof of 
Theorem~\ref{thm4.3}. If $U$ is rotationally 
symmetric, the function $\phi$ takes the 
constant value $|U|_{2,\,s}^2$ and  
(\ref{eq4.20}) is proven.
\qed
\end{proof}
Because equality holds in (\ref{eq4.20}), 
this once again shows that little is lost 
in the error estimate (\ref{eq4.19}) and 
that the prefactors
\begin{equation}    \label{eq4.21}
\int_{-\infty}^\infty P_k(t)^2 f(t)\dt
\end{equation}
cannot really be improved and are de facto 
optimal.

The task is therefore to find the polynomials 
$P_k$ of order $k$ that minimize the integral 
(\ref{eq4.21}) under the constraint $P_k(0)=1$. 
These polynomials can be expressed in terms of 
the orthogonal polynomials assigned to the 
density~$f$ as weight function. Under the 
given circumstances, the expression 
\begin{equation}    \label{eq4.22}
(p,q)=\int_{-\infty}^\infty p(t)q(t)f(t)\dt
\end{equation}
defines an inner product on the space of the 
polynomials. Let the polynomials $p_k$ of 
order $k$ satisfy the orthogonality condition 
$(p_k,p_\ell)=\delta_{k\ell}$. 

\begin{lemma}       \label{lm4.2}
In terms of the given orthogonal polynomials $p_k$,
the polynomial $P_k$ of order $k$ that minimizes 
the integral \rmref{eq4.21} under the constraint 
$P_k(0)=1$ is
\begin{equation}    \label{eq4.23}
P_k(t)=M_k\sum_{j=0}^kp_j(0)p_j(t), \quad
\frac{1}{M_k}=\sum_{j=0}^kp_j(0)^2,
\end{equation}
and the integral itself takes the minimum value
\begin{equation}    \label{eq4.24}
\int_{-\infty}^\infty P_k(t)^2 f(t)\dt=M_k.
\end{equation}
\end{lemma}

\begin{proof}
We represent the optimum polynomial $P_k$ as 
linear combination
\begin{displaymath}
P_k(t)=\sum_{j=0}^k x_jp_j(t).
\end{displaymath}
The zeros of $p_j$ lie strictly between the 
zeros of $p_{j+1}$, the interlacing property 
of the zeros of orthogonal polynomials. The 
polynomials $p_0,p_1,\ldots,p_k$ therefore 
cannot take the value zero at the same time.
Introducing the vector $x$ with the 
components~$x_j$, the vector $p\neq 0$ with 
the components $p_j(0)$,  and the vector 
$a=p/\|p\|$, we have to minimize $\|x\|^2$ 
under the constraint $a^tx=1/\|p\|$. 
Because of
\begin{displaymath}
\|x\|^2=(a^tx)^2+\|x-(a^tx)a\|^2,
\end{displaymath}
the polynomial $P_k$ minimizes the integral
if and only if its coefficient vector $x$ 
is a scalar multiple of $a$ or $p$ that 
satisfies the constraint.
\qed
\end{proof}

In the previous section, we have seen that the 
values $\|T^t\eta\|^2$ are approximately normally 
distributed, with a variance $V$ and a standard 
deviation $\sigma=\sqrt{V}$ that tend to zero in 
almost all cases as the dimensions increase. 
This justifies replacing the actual distribution 
(\ref{eq3.2}) by the corresponding normal 
distribution with the density
\begin{equation}    \label{eq4.25}
f(t)=\frac{1}{\sqrt{2\pi}\,\sigma}\,
\exp\left(-\,\frac{(t-1)^2}{2\sigma^2}\,\right).
\end{equation}
Then one ends up up with a classical case and can 
express the orthogonal polynomials $p_k$ in 
terms of the Hermite polynomials $He_0(x)=1$, 
$He_1(x)=x$, and
\begin{equation}    \label{eq4.26}
He_{k+1}(x)=x\,He_k(x)-k\,He_{k-1}(x), \quad k\geq 1,
\end{equation}
that satisfy the orthogonality condition
\begin{equation}    \label{eq4.27}
\int_{-\infty}^\infty He_k(x)He_\ell(x)\e^{-x^2/2}\dx
 = \sqrt{2\pi}\,k!\,\delta_{kl}.
\end{equation}
In dependence of the standard deviation $\sigma$, 
in this case the $p_k$ are given by
\begin{equation}    \label{eq4.28}
p_k(t)=\frac{1}{\sqrt{k!}}\,He_k\bigg(\frac{t-1}{\sigma}\bigg).
\end{equation}
The first twelve $M_k=M_k(\sigma)$ assigned 
to the orthogonal polynomials (\ref{eq4.28}) 
for the standard deviations $\sigma=1/16$, 
$\sigma=1/32$, $\sigma=1/64$, and $\sigma=1/128$ 
are compiled in Table~1. They give a good 
impression of the speed of convergence that 
can be expected and are fully in line with 
our predictions.

\begin{table}[t]    \label{table1}
\vspace{1.25ex}           
\begin{center}
\renewcommand{\tabcolsep}{10pt}
\renewcommand{\arraystretch}{1.2}
\begin{tabular}{c c c c}
\toprule 
$\sigma=1/16$   &  $\sigma=1/32$   &  $\sigma=1/64$   &  $\sigma=1/128$ \\
\midrule  
 3.891051e-03   &   9.756098e-04   &   2.440810e-04   &   6.103143e-05  \\
 3.051618e-05   &   1.907343e-06   &   1.192093e-07   &   7.450581e-09  \\
 3.618180e-07   &   5.604306e-09   &   8.737544e-11   &   1.364492e-12  \\
 5.765467e-09   &   2.199911e-11   &   8.543183e-14   &   3.332296e-16  \\
 1.157645e-10   &   1.081565e-13   &   1.044654e-16   &   1.017371e-19  \\
 2.812071e-12   &   6.393485e-16   &   1.533625e-19   &   3.727770e-23  \\
 8.035187e-14   &   4.418030e-18   &   2.627999e-22   &   1.593745e-26  \\
 2.645905e-15   &   3.496016e-20   &   5.149158e-25   &   7.788140e-30  \\
 9.884788e-17   &   3.118421e-22   &   1.135567e-27   &   4.282074e-33  \\
 4.138350e-18   &   3.096852e-24   &   2.783941e-30   &   2.616285e-36  \\
 1.922373e-19   &   3.389755e-26   &   7.511268e-33   &   1.758579e-39  \\
 9.827520e-21   &   4.055803e-28   &   2.211916e-35   &   1.289675e-42  \\
\bottomrule   
\end{tabular}
\vspace{6pt}
\caption{The reduction factors $M_k(\sigma)$ assigned to 
 the  approximate density (\ref{eq4.25}) for the first 
 $12$ iterations}
\end{center}
\end{table}

The only matrices $T^t$ for which the distribution 
density (\ref{eq3.9}) is explicitly known and 
available for comparison are the rescaled versions 
of the orthogonal projections from $\mathbb{R}^n$ 
to $\mathbb{R}^m$, $(m\times n)$-matrices with one 
as the only singular value. By Theorem~\ref{thm3.1}, 
the densities (\ref{eq3.9}) assigned to such 
orthogonal projections take the value
\begin{equation}    \label{eq4.29}
g(t)=\frac{1}{B(\a,\b)}\;t^{\a-1}(1-t)^{\b-1}
\end{equation}
on the interval $0<t<1$ and vanish outside of 
it, where $\a$ and $\b$ in this case are given 
by (\ref{eq3.10}). The to the expected value 
one rescaled counterpart of such densities is 
\begin{equation}    \label{eq4.30}
f(t)=\frac{\a}{\a+\b}\, g\bigg(\frac{\a\,t}{\a+\b}\bigg).
\end{equation}
These densities can be transformed to the weight 
functions associated with Jacobi polynomials. The  
orthogonal polynomials assigned to them  can thus 
be expressed in terms of Jacobi polynomials. The 
details can be found in the appendix. For smaller 
dimensions, the resulting values (\ref{eq4.24}) 
tend to zero much faster than the values that one 
gets approximating the actual densities by the 
densities of the corresponding normal distributions. 
This is likely due the behavior of the given beta 
distributions for small $\delta$, a behavior that 
is common to all distributions (\ref{eq3.2}) by 
Theorem~\ref{thm3.1}. However, the values rapidly 
approach each other as the dimensions increase. 

For the matrices $T$ from Sect.~\ref{sec3} assigned 
to undirected interaction graphs, the angular 
distribution of the values $\|T^t\omega\|^2$ is 
often very similar to that in the case of rescaled 
orthogonal projections. In fact, it differs in many 
cases only very slightly from a to the expected 
value one rescaled beta distribution. The variance 
of a rescaled beta distribution with the density
(\ref{eq4.30})  is
\begin{equation}    \label{eq4.31}    
V=\frac{\b}{\a(\a+\b+1)}.
\end{equation}
The parameters $\a$ and $\b$ and the variance $V$
are connected via the relation
\begin{equation}    \label{eq4.32}    
(1-V\a)\b=V\a(\a+1).
\end{equation}
Thus, such a beta distribution with given 
parameter $\a$ and given variance $V$ exists 
if and only if $1-V\a>0$. In this case, it 
is uniquely determined and $\b$ is given by
\begin{equation}    \label{eq4.33}    
\b=\frac{V\a(\a+1)}{1-V\a}.
\end{equation}
If one sets $\a$ to the value $m/2$ given 
by Theorem~\ref{thm3.1}, the compatibility 
condition
\begin{equation}    \label{eq4.34}    
V<\frac{2}{m}
\end{equation}
is satisfied for the matrices $T$ assigned to 
large classes of graphs, without exception or 
with high probability. This follows from the 
representation (\ref{eq3.26}) of the variance
in terms of the vertex degrees. Examples are 
the matrices assigned to regular graphs, whose 
vertices all have the same number of neighbors, 
or the matrices assigned to random graphs with 
a fixed number of vertices and edges. In such 
cases, the resulting densities (\ref{eq4.30})
are almost identical to the actual densities.
The values (\ref{eq4.24}) assigned to them 
therefore reflect the convergence behavior 
presumably at least as well as the values 
resulting from the given approximation by a 
simple Gauss function. Figure~3 compares the 
distribution density originating from the 
matrix assigned to a small random graph with 
$m=32$ vertices and $n-m=96$ edges with its 
approximation by the density of the 
corresponding normal distribution and that 
by the density of the given rescaled beta 
distribution. The variance is $V=941/16640$ 
in this example. The difference is striking, 
but soon disappears for larger graphs.

\begin{figure}[t]   \label{fig3}
\includegraphics[width=0.93\textwidth]{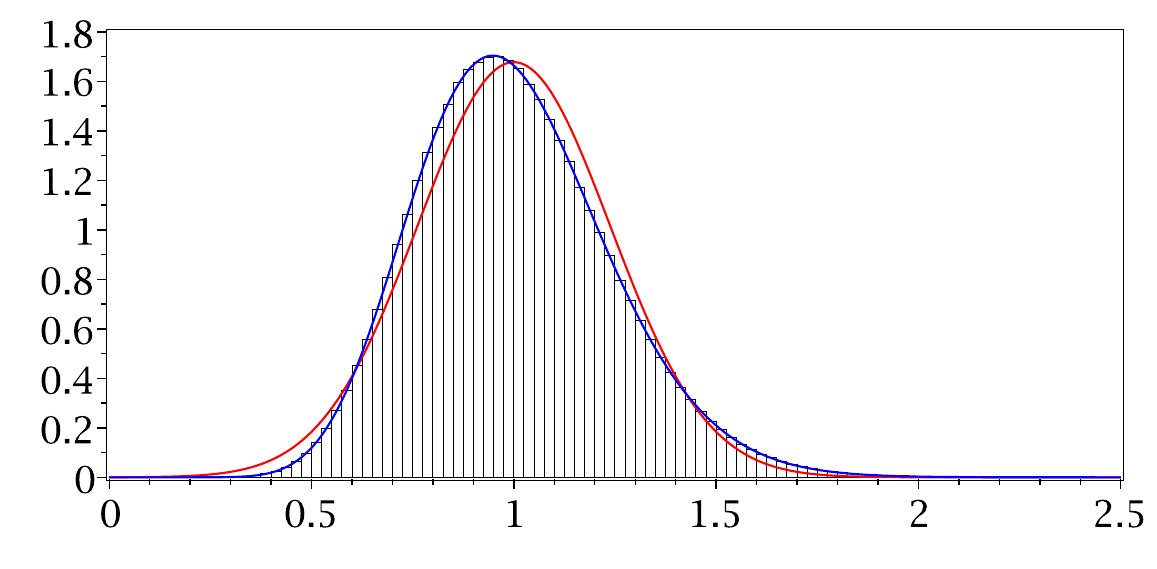}
\caption{Comparison of the approximation by a 
rescaled beta distribution and a normal 
distribution}
\end{figure}

The matrices $T$ assigned to interaction graphs 
still have another nice property. For vectors 
$e$ pointing into the direction of a coordinate 
axis, $\|T^te\|=\|e\|$ holds. The expression 
$\alpha(\omega)(\|T^t\omega\|^2+\mu)$ thus takes 
the value one on the coordinate axes, which is 
the reason for the square root in the definition 
(\ref{eq3.25}) of these matrices. The described 
effects become therefore particularly noticeable
on the regions on which the Fourier transforms 
of functions in hyperbolic cross spaces are 
concentrated. The functions that are well 
representable in tensor formats and in which 
we are first and foremost interested in 
the present paper fall into this category.

We still need to approximate $1/r$ on intervals 
$\mu\leq r\leq R\mu$ with moderate relative 
accuracy by sums of exponential functions, which 
then leads to the approximations (\ref{eq1.4}) 
of the kernel (\ref{eq4.3}) by sums of Gauss 
functions. Relative, not absolute accuracy, 
since these approximations are embedded in an 
iterative process; see the remarks following 
Theorem~\ref{thm4.2}. It suffices to restrict 
oneself to intervals $1\leq r\leq R$. If $v(r)$ 
approximates $1/r$ on this interval with a 
given relative accuracy, the function
\begin{equation}    \label{eq4.35}
r\;\to\;\frac{1}{\mu}\,v\left(\frac{r}{\mu}\right)
\end{equation}
approximates the function $r\to 1/r$ on the original 
interval $\mu\leq r\leq R\mu$ with the same relative 
accuracy. Good approximations of $1/r$ with 
astonishingly small relative error are the at 
first sight rather harmless looking sums
\begin{equation}    \label{eq4.36}
v(r)=h\sum_{k=k_1}^{k_2}\e^{-kh}\exp(-\e^{-kh}r),
\end{equation}
a construction due to Beylkin and Monz\'{o}n
\cite{Beylkin-Monzon} based on an integral
representation. The parameter $h$ determines
the accuracy and the quantities $k_1h$ and $k_2h$ 
control the approximation interval. The functions 
(\ref{eq4.36}) possess the representation
\begin{equation}    \label{eq4.37}
v(r)=\frac{\phi(\ln r)}{r}, \quad
\phi(s)=\,h\sum_{k=k_1}^{k_2}\varphi(s-kh),
\end{equation}
in terms of the for $s$ going to infinity 
rapidly decaying window function
\begin{equation}    \label{eq4.38}
\varphi(s)=\exp(-\e^s+s).
\end{equation}
To determine with which relative error the function
(\ref{eq4.36}) approximates $1/r$ on a given 
interval $1\leq r\leq R$, thus one must check
how well the function $\phi$ approximates the 
constant $1$ on the interval $0\leq s\leq\ln R$. 

For $h=1$ and summation indices $k$ ranging 
from $-2$ to $50$, the relative error is, 
for example, less than $0.0007$ on almost 
the whole interval $1\leq r\leq 10^{18}$, 
that is, in the per mill range on an interval 
that spans eighteen orders of magnitude. 
Such an accuracy is surely exaggerated in 
the given context, but the example underscores 
the excellent approximation properties of 
the functions (\ref{eq4.36}).  Figure~4 
depicts the corresponding function $\phi$. 
These observations are underpinned by the 
analysis of the approximation properties 
of the corresponding infinite series in 
\cite[Sect. 5]{Scholz-Yserentant}. 
It is shown there that these series 
approximate $1/r$ on the positive real 
axis with a relative error
\begin{equation}    \label{eq4.39}
\sim 4\pi h^{-1/2}\e^{-\pi^2/h}.
\end{equation}

High relative accuracy on large intervals 
$1\leq r\leq R$ is a considerably stronger 
requirement than high absolute accuracy. 
The approximation of $1/r$ with minimum 
absolute error has been studied in 
\cite{Braess-Hackbusch}, 
\cite{Braess-Hackbusch_2}, and 
\cite{Hackbusch_2}.
The technique in \cite{Hackbusch_2} can be 
used to compute the approximations with 
the least relative error. On the given 
interval, one gains a factor of just over 
five with a sum of $50$ exponential functions. 
Hackbusch has compiled the best approximations 
for a large number of intervals and sums of 
exponential functions of very different 
length. The data can be downloaded from 
the website of the Max Planck Institute
in Leipzig \cite{Hackbusch_3}.

\begin{figure}[t]   \label{fig4}
\includegraphics[width=0.93\textwidth]{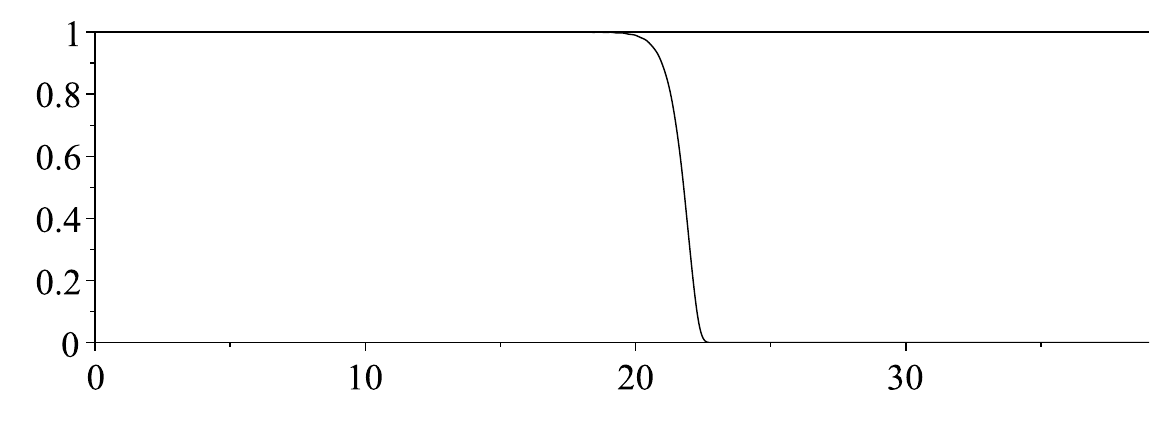}
\caption{The rescaled function $s\to\phi(s\ln 10)$ 
 approximating $1$ for $h=1$, $k_1=-2$, 
 and $k_2=50$}
\end{figure}

The practical feasibility of the approach depends 
on the representation of the tensors involved and 
the access to the Fourier transforms of the 
functions represented by them. A central task not 
discussed here is the recompression of the data 
between the iteration steps in order to keep the 
amount of work and storage under control, a problem 
common to all tensor-oriented iterative methods. 
If one fixes the accuracy in the single coordinate 
directions, the process reduces to an iteration on the 
space of the functions defined on a given high-dimensional 
cubic grid, functions that are stored in a compressed 
tensor format. There exist highly efficient, linear 
algebra-based techniques for recompressing such data. 
A problem in the given context may be that the in this 
framework naturally looking discrete $\ell_2$-norm of 
the tensors does not match the underlying norms of 
the continuous problem. Another open question is the 
overall complexity of the process. A difficulty 
with our approach is that the given operators do 
not split into sums of operators that act separately 
on a single variable or a small group of variables, 
a fact that complicates the application of techniques 
such as of those in \cite{Bachmayr-Dahmen} or 
\cite{Dahmen-DeVore-Grasedyck-Sueli}. For more 
information on tensor-oriented solution methods 
for partial differential equations, see the 
monographs \cite{Hackbusch_1} and \cite{Khoromskij} 
of Hackbusch and of Khoromskij and Bachmayr's 
comprehensive survey article \cite{Bachmayr}.


\bigskip

\noindent
{\bf Remark}

\medskip
\noindent
This is a largely rewritten version of the 
paper [Numer. Math. (2024) 156:777--811] 
of the same title. A central new result is 
Theorem~\ref{thm3.1}. The present version
uses two new scales of norms that directly 
measure the smoothness of the trace 
functions and fit better into the given 
framework.


\section*{Appendix. Beta distributions and Jacobi polynomials}

Setting $a=\b-1$, $b=\a-1$, and introducing the constants
\begin{equation*}
K(k,a,b)=
\frac{2^{a+b+1}\,\Gamma(k+a+1)\Gamma(k+b+1)}
{(2k+a+b+1)\Gamma(k+1)\Gamma(k+a+b+1)},
\end{equation*}
the density (\ref{eq4.30}) can be rewritten in the form
\begin{equation*}
f(t)=\,2\,\frac{b+1}{a+b+2}\,
\frac{(1-x)^a(1+x)^b}{K(0,a,b)},\quad
x={}-1+2\,\frac{b+1}{a+b+2}\;t.
\end{equation*}
The orthogonal polynomials $p_k$ from Lemma~\ref{lm4.2}
assigned to this weight function~$f$ can therefore 
be expressed in terms of the Jacobi polynomials
\begin{equation*}
P_k^{(a,b)}(x)=P(k,a,b;x).
\end{equation*}
The Jacobi polynomials satisfy the orthogonality relation
\begin{equation*}
\int_{-1}^1P(k,a,b;x)P(\ell,a,b;x)(1-x)^a(1+x)^b\dx
=K(k,a,b)\delta_{k\ell}.
\end{equation*}
The polynomials $p_k$ therefore possess the representation
\begin{equation*}
p_k(t)=\bigg(\frac{K(0,a,b)}{K(k,a,b)}\bigg)^{1/2}
P\bigg(k,a,b;\,-1+2\,\frac{b+1}{a+b+2}\;t\,\bigg).
\end{equation*}
At $x=-1$, the Jacobi polynomials take the value
\begin{equation*}
P(k,a,b;\,-1)=
(-1)^k\frac{\Gamma(k+b+1)}{\Gamma(k+1)\Gamma(b+1)},
\end{equation*}
see \cite[Table 22.4]{Abramowitz-Stegun} 
or \cite[Table 18.6.1]{DLMF}. This leads 
to the closed representation
\begin{equation*}
p_k(0)^2=\,\frac
{(2k+a+b+1)\Gamma(a+1)\Gamma(k+b+1)\Gamma(k+a+b+1)}
{\Gamma(a+b+2)\Gamma(b+1)\Gamma(k+1)\Gamma(k+a+1)}
\end{equation*}
of the values $p_k(0)^2$. Starting from $p_0(0)^2=1$,
they can therefore be computed in a numerically very 
stable way by the recursion
\begin{equation*}
p_{k+1}(0)^2=\,\frac
{(k+b+1)(k+a+b+1)(2k+a+b+3)}
{(k+1)(k+a+1)(2k+a+b+1)}\;
p_k(0)^2.
\end{equation*}


\bibliographystyle{spmpsci}
\bibliography{references}


\newpage

\section*{Supplement. Complete graphs and symmetry conservation}

\newenvironment{unnumberedlemma}{\vspace{2ex}\noindent\bf Lemma \it}{}

The goal in the background is the application 
of the here developed techniques to approximate 
inverse iterations for the calculation of the 
ground state of the electronic Schr\"odinger 
equation; see \cite{Scholz-Yserentant} for more 
details. Electronic wave functions are subject 
to the Pauli principle. Taking the spin into 
account, they are antisymmetric with respect 
to the exchange of the electrons, and if one 
considers the different spin components of the 
wave functions separately, they are antisymmetric 
with respect to the exchange of the electrons 
with the same spin. The following considerations 
show that such symmetry properties are preserved 
in the course of the calculations.

Since all electrons interact with each other, 
the matrix $T$ is in this case the matrix that 
is assigned to a complete graph, or better a 
triple copy of this matrix, one copy for each 
of the three spatial coordinates. We label the 
variables assigned to the vertices by the 
indices $i=1,\ldots,m$ and the variables 
assigned to the edges by the ordered index 
pairs $(i,j)$, $i<j$. The components of $Tx$ 
are in this notation
\begin{equation*}    
Tx|_i=x_i, \quad Tx|_{i,j}=\frac{x_i-x_j}{\sqrt{2}}.
\end{equation*}
To every permutation $\pi$ of the vertices  
$i=1,\ldots,m$ associated with the positions 
of the electrons, we assign two matrices, 
the $(m\times m)$-permutation matrix $P$ 
given by
\begin{equation*}    
Px|_i=x_{\pi(i)}, \quad i=1,\ldots,m,
\end{equation*}
and the much larger $(n\times n)$-matrix 
$Q$ given by
\begin{equation*}    
Qy|_i=y_{\pi(i)}
\end{equation*}
for the first $m$ components of $Qy$ associated 
with the vertices of the graph and by
\begin{equation*}   
Qy|_{ij}=
\begin{cases}
\; y_{\pi(i),\pi(j)}, &\text{if $\pi(i)<\pi(j)$} \\
{}-y_{\pi(j),\pi(i)}, &\text{otherwise}
\end{cases} 
\end{equation*}
for the remaining components associated 
with the edges. By construction then
\begin{equation*}    
 TP=QT
\end{equation*}
holds, which is the key to the following
considerations. As a permutation matrix,
the matrix $P$ is orthogonal. The same
holds for the matrix $Q$.

\begin{unnumberedlemma}       
Up to sign changes, the matrix $Q$ is 
a permutation matrix.
\end{unnumberedlemma}

\begin{proof}
Let the indices $i<j$ be given. 
If $\pi^{-1}(i)<\pi^{-1}(j)$, then
\begin{displaymath}
Qy|_{\pi^{-1}(i),\pi^{-1}(j)}=y_{ij}
\end{displaymath}
holds, and if $\pi^{-1}(j)<\pi^{-1}(i)$ one has
\begin{displaymath}
Qy|_{\pi^{-1}(j),\pi^{-1}(i)}={}-y_{ij}.
\end{displaymath}
All components of $y$ therefore also appear as
components of $Qy$, either with positive or 
negative sign, just in a different order.
\qed
\end{proof}
Furthermore, we assign to the matrix $Q$ the value
\begin{equation*}    
\epsilon(Q)=\mathrm{sign}(\pi),
\end{equation*}
that is, the number $\epsilon(Q)=+1$, if $\pi$ 
consists of an even number of transpositions, 
and the number $\epsilon(Q)=-1$ otherwise. 

Let $G$ be a subgroup of the symmetric group 
$S_m$, the group of the permutations of the 
indices $1,\ldots,m$. The matrices $P$ assigned 
to the elements of $G$ form a group which is
isomorphic to $G$ under the matrix multiplication 
as composition. We say that a function 
$u:\mathbb{R}^m\to\mathbb{R}$ is antisymmetric 
under the permutations in $G$, or for short
antisymmetric under $G$, if for all matrices 
$P=P(\pi)$ assigned to the $\pi\in G$
\begin{equation*}    
u(Px)=\mathrm{sign}(\pi)u(x)
\end{equation*}
holds. We say that a function
$U:\mathbb{R}^n\to\mathbb{R}$ is antisymmetric 
under $G$ if for all these permutations $\pi$ 
and the assigned matrices $Q=Q(\pi)$ 
\begin{equation*}    
U(Qy)=\epsilon(Q)U(y)
\end{equation*}
holds. Both properties correspond to
each other.

\begin{unnumberedlemma}       
If the function $U:\mathbb{R}^n\to\mathbb{R}$
is antisymmetric under $G$, so are its traces.
\end{unnumberedlemma}

\begin{proof}
This follows from the relation $TP=QT$ between 
the matrices. We have
\begin{displaymath}
U(TPx)=U(QTx)=\epsilon(Q)U(Tx).
\end{displaymath}
Because of $\epsilon(Q)=\mathrm{sign}(\pi)$,
the proposition follows.
\qed
\end{proof}
We say that a function $U:\mathbb{R}^n\to\mathbb{R}$
is symmetric under the permutations in $G$, or for
short symmetric under $G$, if for all matrices 
$Q=Q(\pi)$, $\pi\in G$,
\begin{equation*}   
U(Qy)=U(y)
\end{equation*}
holds. Since the matrices $Q(\pi)$ assigned to 
the permutations in $\pi\in G$ are orthogonal,
rotationally symmetric functions have this 
property.

In the following we show that for any right-hand 
side $F$ of the equation (\ref{eq2.9}), which is 
antisymmetric under the permutations in $G$, its 
solution (\ref{eq2.10}) is also antisymmetric 
under $G$ and that the same holds for the 
approximations of this solution. 

\begin{unnumberedlemma}       
If the function $F:\mathbb{R}^n\to\mathbb{R}$ 
possesses an integrable Fourier transform and
is antisymmetric under $G$, then for any 
measurable and bounded kernel $K$, which is 
symmetric under $G$, the functions
\begin{equation*}    
U(y)=\bigg(\frac{1}{\sqrt{2\pi}}\bigg)^n\!\int
K(\omega)\,\fourier{F}(\omega)\,
\mathrm{e}^{\,\mathrm{i}\,\omega\cdot y}\domega
\end{equation*}
are antisymmetric under the permutations 
in $G$.
\end{unnumberedlemma}

\begin{proof}
From the orthogonality of the matrices $Q=Q(\pi)$, 
$\pi\in G$, we obtain 
\begin{displaymath}    
U(Qy)=\bigg(\frac{1}{\sqrt{2\pi}}\bigg)^n\!\int
K(Q\omega)\,\fourier{F}(Q\omega)\,
\mathrm{e}^{\,\mathrm{i}\,\omega\cdot y}\domega
\end{displaymath}
and in the same way the representation
\begin{displaymath}    
\fourier{F}(Q\omega)=  
\bigg(\frac{1}{\sqrt{2\pi}}\bigg)^n\!\int
F(Qy)\,\mathrm{e}^{\,-\mathrm{i}\,\omega\cdot y}\dy.
\end{displaymath}
Because of $F(Qy)=\epsilon(Q)F(y)$, the Fourier 
transform of $F$ thus transforms like
\begin{displaymath}
\fourier{F}(Q\omega)=\epsilon(Q)\fourier{F}(\omega).
\end{displaymath}
As by assumption $K(Q\omega)=K(\omega)$, 
the proposition follows.
\qed
\end{proof}
The norm $\|\omega\|$ of the vectors $\omega$ in 
$\mathbb{R}^n$ is symmetric under the permutations 
in $G$, but also the norm of the vectors 
$T^t\omega$ in $\mathbb{R}^m$. This follows from 
the interplay of the matrices $P$, $Q$, and $T$, 
which is expressed in the relation $TP=QT$ 
and leads to
\begin{equation*}    
\|T^tQ\omega\|=\|P^tT^tQ\omega\|=
\|T^tQ^tQ\omega\|=\|T^t\omega\|.
\end{equation*}
This means that every kernel $K(\omega)$ which 
depends on the norms of $\omega$ and $T^t\omega$ 
only is symmetric under $G$. Provided that the 
right-hand side $F$ is antisymmetric under $G$, 
the same holds for the solution (\ref{eq2.10})
of the equation (\ref{eq2.9}) and for all its
approximations calculated as described in the
paper. 

In summary, the solution of the equation 
(\ref{eq2.9}) and all its approximations 
and their traces completely inherit the 
symmetry properties of the right-hand side. 
We conclude that our theory is fully 
compatible with the Pauli principle. What 
is still missing is a procedure for the 
recompression of the data between the single 
iteration steps that preserves the symmetry 
properties.


\end{document}